\newtheorem{theorem}{Theorem}[section] 
\newtheorem{lemma}[theorem]{Lemma}     
\newtheorem{corollary}[theorem]{Corollary}
\newtheorem{proposition}[theorem]{Proposition}
\providecommand{\set}[1]{\left\{#1\right\}}
\DeclareMathOperator{\gen}{gen}
\title[Box-counting dimension product formulas]
 {Strict inequality in the box-counting dimension product formulas} 
\author{N. Sharples}
\begin{document}
\maketitle

\begin{abstract}
It is known that the upper box-counting dimension of a Cartesian product satisfies the inequality $\dim_{B}\left(F\times G\right)\leq \dim_{B}\left(F\right) + \dim_{B}\left(G\right)$ whilst the lower box-counting dimension satisfies the inequality $\dim_{LB}\left(F\times G\right)\geq \dim_{LB}\left(F\right) + \dim_{LB}\left(G\right)$. We construct Cantor-like sets to demonstrate that both of these inequalities can be strict.
\end{abstract}


\section{Preliminaries} 
\label{intro}
\noindent
In a metric space $X$ the Hausdorff dimension of a compact set $F\subset X$ is defined as the supremum of the $d\geq 0$ such that the $d$-dimensional Hausdorff measure $\mathcal{H}^{d}\left(F\right)$ of $F$ is infinite. The Hausdorff dimension takes values in the non-negative reals and extends the elementary integer-valued topological dimension in the sense that for a large class of `reasonable' sets these two values coincide. Sets with non-coinciding Hausdorff and topological dimensions are called `fractal', a term coined by Mandelbrot in his original study of such sets \cite{Mandelbrot75}. Hausdorff introduced this generalised dimension in \cite{Hausdorff18} and its subsequent extensive use in geometric measure theory is developed by Federer \cite{BkFederer69} and Falconer \cite{BkFalconer85}. The fact that the Hausdorff dimension satisfies $\dim_{H}\left(F\times G\right)\geq \dim_{H}\left(F\right) + \dim_{H}\left(G\right)$ for the Cartesian product of sets was proved in full generality in \cite{Marstrand53} (and later summarised in \cite{BkFalconer03} \S 7.1 `Product Formulae') after some partial results: The inequality was proved in \cite{BesicovitchMoran45} with the restriction that $0<\mathcal{H}^{s}\left(F\right),\mathcal{H}^{t}\left(G\right)<\infty$ for some $s,t$ and was extended to a larger class of sets in \cite{Eggleston53}. The paper \cite{BesicovitchMoran45} also provides an example for which there is a strict inequality in the product formula and again this is summarised in \cite{BkFalconer03} \S 7.1.
\paragraph*{}
In this paper we prove similar product inequalities for the upper and lower box-counting dimensions which are less familiar generalisations of dimension (treated briefly in \cite{BkFalconer03}; see \cite{BkRobinson10} for a more detailed exposition) and have applications to dynamical systems (see, for example, \cite{BkRobinson01}). Our main result is an example analogous to that in \cite{BesicovitchMoran45} which demonstrates that the box-counting product inequalities can be strict.
In a metric space $X$ the upper and lower box-counting dimensions of a compact set $F\subset X$ are defined by
\begin{align}
\dim_{B}\left(F\right) &= \limsup_{\delta\searrow 0}\frac{\log\left(N\left(F,\delta\right)\right)}{-\log\delta}\label{dimB def}
\intertext{and}
\dim_{LB}\left(F\right) &= \liminf_{\delta\searrow 0}\frac{\log\left(N\left(F,\delta\right)\right)}{-\log\delta}\label{dimLB def}
\end{align}
respectively, where $N\left(F,\delta\right)$ is the smallest number of sets with diameter at most $\delta$ which form a cover (called a $\delta$-cover) of $F$. Essentially, if $N\left(F,\delta\right)$ scales like $\delta^{-\varepsilon}$ as $\delta\rightarrow 0$ then these quantities capture $\varepsilon$ which gives an indication of how many more sets are required to cover $F$ as the length-scales decrease and so encodes how `spread out' the set $F$ is at small length-scales. These limits are unchanged if we replace $N\left(F,\delta\right)$ with one of many similar quantities (discussed by Falconer in \cite{BkFalconer03} \S 3.1 `Equivalent Definitions'). Of these quantities we will also make use of the largest number of disjoint closed balls of diameter $\delta$ with centres in $F$, which we denote $M\left(F,\delta\right)$.

\paragraph*{}
We first recall the standard (see, for example, \cite{BkFalconer03}  or \cite{BkRobinson10}) proof of the box-counting product inequalities when $F$ and $G$ are compact sets in metric spaces $X$ and $Y$ respectively, although the inequality \eqref{dimLB product inequality} is less familiar (Robinson provides a proof in \cite{BkRobinson10}). We endow the product space $X\times Y$ with the usual Euclidean metric $d_{X\times Y}=\sqrt{d_{X}^{2}+d_{Y}^{2}}$, but the proof can be adapted for a variety of product metrics (see \cite{BkRobinson10}).
\begin{theorem}\label{product inequalities theorem}
For compact sets $F\subset X$ and $G\subset Y$ the box-counting dimensions of the product set $F\times G$ satisfy the inequalities
\begin{align}
\dim_{B}\left(F\times G\right)&\leq \dim_{B}\left(F\right) + \dim_{B}\left(G\right)\label{dimB product inequality}\\
\dim_{LB}\left(F\times G\right)&\geq \dim_{LB}\left(F\right) + \dim_{LB}\left(G\right)\label{dimLB product inequality}
\end{align}
\end{theorem}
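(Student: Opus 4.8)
The plan is to prove the two inequalities separately, pairing the upper bound with a \emph{covering} argument and the lower bound with a \emph{packing} argument. The two halves are formally dual, but the choice of $N$ versus $M$ is forced by the direction of the inequality together with the sub-/super-additivity of $\limsup$ and $\liminf$.

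For \eqref{dimB product inequality} I would start from economical covers. Fix $\delta>0$ and let $\set{U_{i}}_{i=1}^{N\left(F,\delta\right)}$ and $\set{V_{j}}_{j=1}^{N\left(G,\delta\right)}$ be covers of $F$ and $G$ by sets of diameter at most $\delta$. The products $U_{i}\times V_{j}$ cover $F\times G$, and in the Euclidean product metric each satisfies $\diam\left(U_{i}\times V_{j}\right)\leq\sqrt{\diam\left(U_{i}\right)^{2}+\diam\left(V_{j}\right)^{2}}\leq\sqrt{2}\,\delta$. This yields the submultiplicative bound
\begin{equation*}
N\left(F\times G,\sqrt{2}\,\delta\right)\leq N\left(F,\delta\right)N\left(G,\delta\right).
\end{equation*}
Taking logarithms and dividing by $-\log\left(\sqrt{2}\,\delta\right)=-\log\delta-\tfrac{1}{2}\log 2$, the additive constant $\tfrac{1}{2}\log 2$ is swamped by $-\log\delta$ as $\delta\searrow 0$ and so disappears in the limit. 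The conclusion then follows from the subadditivity of $\limsup$ applied to the sequences $\log N\left(F,\delta\right)/\left(-\log\delta\right)$ and $\log N\left(G,\delta\right)/\left(-\log\delta\right)$.

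For \eqref{dimLB product inequality} I would instead use the packing quantity $M$, which (as recorded above) computes the same box-counting dimensions. Fix $\delta>0$ and choose centres $x_{1},\dots,x_{M\left(F,\delta\right)}\in F$ and $y_{1},\dots,y_{M\left(G,\delta\right)}\in G$ whose closed balls of diameter $\delta$ are pairwise disjoint; disjointness of closed balls of radius $\delta/2$ forces the centres to be more than $\delta$ apart in $X$ and in $Y$ respectively. For two distinct pairs $\left(x_{i},y_{j}\right)\neq\left(x_{i'},y_{j'}\right)$ at least one coordinate differs, so
\begin{equation*}
d_{X\times Y}\bigl(\left(x_{i},y_{j}\right),\left(x_{i'},y_{j'}\right)\bigr)=\sqrt{d_{X}\left(x_{i},x_{i'}\right)^{2}+d_{Y}\left(y_{j},y_{j'}\right)^{2}}>\delta,
\end{equation*}
whence the closed balls of diameter $\delta$ centred at the $M\left(F,\delta\right)M\left(G,\delta\right)$ product points are pairwise disjoint. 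This gives the supermultiplicative bound $M\left(F\times G,\delta\right)\geq M\left(F,\delta\right)M\left(G,\delta\right)$, and dividing $\log M\left(F\times G,\delta\right)$ by $-\log\delta$ and invoking the superadditivity of $\liminf$ delivers \eqref{dimLB product inequality}.

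The genuinely delicate point is not any single estimate but the matching of \emph{covering with} $\limsup$ \emph{and packing with} $\liminf$. The covering bound controls $N\left(F\times G,\cdot\right)$ from above, which cooperates with the subadditive $\limsup$ to bound $\dim_{B}$ from above; that same bound is useless for the lower inequality, since there $\liminf$ is superadditive and points the wrong way, so one genuinely needs the reverse multiplicative estimate that only the packing construction provides. The remaining routine checks are that the $\sqrt{2}$ metric distortion vanishes in the limit (it is an additive constant after taking logarithms) and that $M$ and $N$ are interchangeable in both the $\limsup$ and the $\liminf$.
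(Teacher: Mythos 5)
Your proposal is correct and follows essentially the same route as the paper: the product-of-covers construction giving $N\left(F\times G,\sqrt{2}\delta\right)\leq N\left(F,\delta\right)N\left(G,\delta\right)$ for the upper bound, and the product-of-packing-centres construction giving $M\left(F\times G,\delta\right)\geq M\left(F,\delta\right)M\left(G,\delta\right)$ for the lower bound, combined with sub-/super-additivity of $\limsup$/$\liminf$. Your explicit verification that distinct product centres are separated by more than $\delta$ is a slightly more careful rendering of the disjointness claim than the paper gives, but the argument is the same.
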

\begin{proof}
Suppose $\set{U_{i}}_{i=1}^{n_{1}}$ and $\set{V_{j}}_{j=1}^{n_{2}}$ are $\delta$-covers of $F$ and of $G$ respectively then the set of products $\set{U_{i}\times V_{j}\vert i=1\ldots n_{1}, j=1\ldots n_{2}}$ is a cover of $F\times G$ with a total of $n_{1}n_{2}$ elements and the diameter of each $U_{i}\times V_{j}$ is no greater than $\sqrt{2}\delta$. As there exist $\delta$-covers of $F$ and $G$ consisting of $N\left(F,\delta\right)$ and $N\left(G,\delta\right)$ elements respectively this construction gives a $\sqrt{2}\delta$-cover of $F\times G$ consisting of $N\left(F,\delta\right)N\left(G,\delta\right)$ elements, hence
\begin{equation}\label{minimal cover inequality}
N\left(F\times G,\sqrt{2}\delta\right) \leq N\left(F,\delta\right)N\left(G,\delta\right).
\end{equation}
Next, if both $\set{x_{i}}_{i=1}^{n_{1}}\subset F$ and $\set{y_{j}}_{j=1}^{n_{2}}\subset G$ are sets of centres of disjoint balls with diameter $\delta$ then the balls with radius $\delta$ centred on the $n_{1}n_{2}$ points $\set{\left(x_{i},y_{j}\right)\vert i=1\ldots n_{1}, j=1\ldots n_{2}}\subset F\times G$, are also disjoint. As there exist sets of disjoint balls of diameter $\delta$ with centres in $F$ and $G$ consisting of $M\left(F,\delta\right)$ and $M\left(G,\delta\right)$ elements respectively the above construction gives $M\left(F,\delta\right)M\left(G,\delta\right)$ disjoint balls of diameter $\delta$ with centres in $F\times G$, hence
\begin{equation}\label{maximal disjoint inequality}
M\left(F\times G,\delta\right) \geq M\left(F,\delta\right)M\left(G,\delta\right).
\end{equation}
From \eqref{dimB def} and \eqref{minimal cover inequality} we see that
\begin{align}
\dim_{B}\left(F\times G\right) &= \limsup_{\delta\searrow 0}\frac{\log\left(N\left(F\times G,\sqrt{2}\delta\right)\right)}{-\log\left(\sqrt{2}\delta\right)}\notag\\
&\leq \limsup_{\delta\searrow 0} \left[\frac{\log\left(N\left(F,\delta\right)\right)}{-\log\left(\sqrt{2}\delta\right)} + \frac{\log\left(N\left(G,\delta\right)\right)}{-\log\left(\sqrt{2}\delta\right)}\right]\label{limsup of sum}\\
&\leq \limsup_{\delta\searrow 0} \frac{\log\left(N\left(F,\delta\right)\right)}{-\log\delta -\log\sqrt{2}} + \limsup_{\delta\searrow 0} \frac{\log\left(N\left(G,\delta\right)\right)}{-\log\delta -\log\sqrt{2}}\notag\\
& = \dim_{B}\left(F\right) + \dim_{B}\left(G\right). \notag
\intertext{From \eqref{dimLB def} and \eqref{maximal disjoint inequality} we have}
\dim_{LB}\left(F\times G\right) &= \liminf_{\delta\searrow 0}\frac{\log\left(M\left(F\times G,\delta\right)\right)}{-\log\delta}\notag\\
&\geq \liminf_{\delta\searrow 0}\left[\frac{\log\left(M\left(F,\delta\right)\right)}{-\log\delta} + \frac{\log\left(M\left(G,\delta\right)\right)}{-\log\delta}\right]\label{liminf of sum}\\
&\geq \liminf_{\delta\searrow 0}\frac{\log\left(M\left(F,\delta\right)\right)}{-\log\delta} + \liminf_{\delta\searrow 0}\frac{\log\left(M\left(G,\delta\right)\right)}{-\log\delta} \notag \\
&= \dim_{LB}\left(F\right) + \dim_{LB}\left(G\right). \notag
\end{align}
\end{proof}
It is known that there are sets with unequal upper and lower box-counting dimension (see exercise 3.8 of \cite{BkFalconer03} or \cite{BkRobinson10} \S 3.1), however if these values coincide for a set $F$ we define their common value as the box-counting dimension of $F$. If sets $F$ and $G$ have well-defined box-counting dimension then the box-counting dimension of their product is also well behaved.

\begin{corollary}
If $\dim_{B}\left(F\right)=\dim_{LB}\left(F\right)$ and $\dim_{B}\left(G\right)=\dim_{LB}\left(G\right)$ then 
\[\dim_{B}\left(F\times G\right) = \dim_{LB}\left(F\times G\right) = \dim_{B}\left(F\right)+\dim_{B}\left(G\right).\]
\end{corollary}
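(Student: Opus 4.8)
The plan is to sandwich both the upper and the lower box-counting dimension of $F\times G$ between the two bounds already supplied by Theorem~\ref{product inequalities theorem}, and then observe that the hypotheses force these bounds to coincide. Write $a = \dim_{B}\left(F\right) = \dim_{LB}\left(F\right)$ and $b = \dim_{B}\left(G\right) = \dim_{LB}\left(G\right)$; the goal is to show that every one of the four quantities $\dim_{B}\left(F\times G\right)$, $\dim_{LB}\left(F\times G\right)$ collapses onto the single value $a+b$.

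The one ingredient not yet recorded in the excerpt is the elementary fact that for any compact set $S$ one has $\dim_{LB}\left(S\right) \leq \dim_{B}\left(S\right)$. This is immediate from the definitions \eqref{dimB def} and \eqref{dimLB def}, since for a fixed function of $\delta$ the $\liminf_{\delta\searrow 0}$ never exceeds the $\limsup_{\delta\searrow 0}$. Applying this to $S = F\times G$ gives the central link $\dim_{LB}\left(F\times G\right) \leq \dim_{B}\left(F\times G\right)$.

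First I would invoke the upper inequality \eqref{dimB product inequality}, which together with the hypotheses yields $\dim_{B}\left(F\times G\right) \leq \dim_{B}\left(F\right) + \dim_{B}\left(G\right) = a+b$. Next I would invoke the lower inequality \eqref{dimLB product inequality}, which gives $\dim_{LB}\left(F\times G\right) \geq \dim_{LB}\left(F\right) + \dim_{LB}\left(G\right) = a+b$. Chaining these with the link from the previous paragraph produces
\[
a+b \;\leq\; \dim_{LB}\left(F\times G\right) \;\leq\; \dim_{B}\left(F\times G\right) \;\leq\; a+b,
\]
so the outer terms coincide and every inequality in the chain is forced to be an equality. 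Hence $\dim_{B}\left(F\times G\right) = \dim_{LB}\left(F\times G\right) = a+b = \dim_{B}\left(F\right) + \dim_{B}\left(G\right)$, as claimed.

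There is no genuine obstacle here: the entire content is the sandwiching argument, and the only point requiring any care is to make sure the liminf-dominated-by-limsup inequality is stated explicitly before it is used, since without it the two bounds from Theorem~\ref{product inequalities theorem} alone would not close the gap. Everything else is a direct substitution of the hypotheses into the two product inequalities.
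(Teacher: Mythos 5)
Your proof is correct and is essentially the same sandwiching argument as the paper's: both combine the two inequalities of Theorem~\ref{product inequalities theorem} with the elementary fact that $\dim_{LB}\left(F\times G\right)\leq\dim_{B}\left(F\times G\right)$ to force equality throughout the chain. The only difference is cosmetic ordering of the chain of inequalities.
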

\begin{proof}
From the inequalities \eqref{dimB product inequality} and \eqref{dimLB product inequality} we have
\[
\dim_{B}\left(F\times G\right) \leq \dim_{B}\left(F\right) + \dim_{B}\left(G\right) = \dim_{LB}\left(F\right) + \dim_{LB}\left(G\right) \leq \dim_{LB}\left(F\times G\right)
\]
but from the definition of the box-counting dimension $\dim_{LB}\left(F\times G\right)\leq \dim_{B}\left(F\times G\right)$ so we must have equality throughout the above.
\end{proof}

\paragraph*{}
In the following construction both sets $F$ and $G$ have non-coinciding upper and lower box-counting dimensions so that as $\delta\rightarrow 0$ the box-counting functions $\frac{\log\left(N\left(F,\delta\right)\right)}{-\log\delta}$ and $\frac{\log\left(N\left(G,\delta\right)\right)}{-\log\delta}$ oscillate between two values. Further, by ensuring that these functions oscillate with different phases (see figure \ref{figure - product graph}) we can produce strict inequalities after \eqref{limsup of sum} and \eqref{liminf of sum} and so yield strict inequality in both product formulas, that is
\[
\dim_{LB}\left(F\times G\right) < \dim_{LB}\left(F\right) + \dim_{LB}\left(G\right) < \dim_{B}\left(F\right) + \dim_{B}\left(G\right) < \dim_{B}\left(F\times G\right).
\]

\begin{figure}
   \vspace*{8pt}
\includegraphics[scale=0.5]{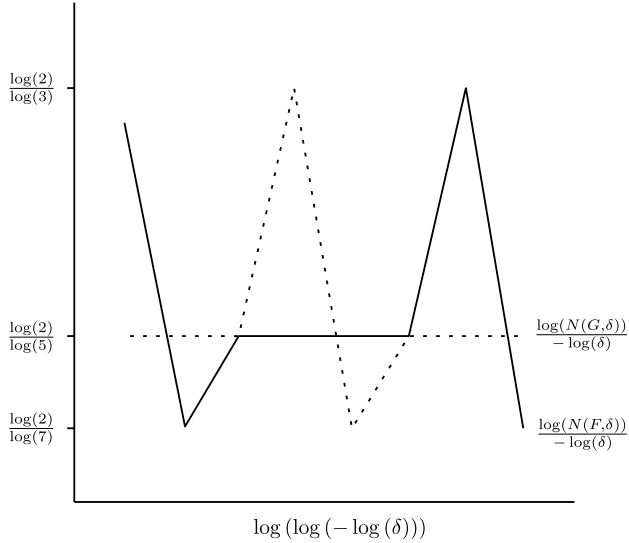}
   \vspace*{8pt}
\caption{The box-counting functions for the sets $F$ and $G$ constructed in the next section (explicitly computed here for small $\delta$) oscillate between $\frac{\log\left(2\right)}{\log\left(3\right)}$ and $\frac{\log\left(2\right)}{\log\left(7\right)}$. The $x$-axis is scaled so that the slow oscillation can be graphed, and this oscillation continues as $\log\left(\log\left(-\log\left(\delta\right)\right)\right)\rightarrow \infty$, that is as $\delta\rightarrow 0$. The differing phases guarantee that the sum of these functions doesn't approach either $\frac{\log\left(2\right)}{\log\left(3\right)}+\frac{\log\left(2\right)}{\log\left(3\right)}$ or $\frac{\log\left(2\right)}{\log\left(7\right)}+\frac{\log\left(2\right)}{\log\left(7\right)}$.}
\label{figure - product graph}
\end{figure}

To this end we construct variations of the Cantor middle-third set from the initial interval $\left[0,1\right]$ except at each stage we use one of the three generators $\gen_{3},\gen_{5}$ or $\gen_{7}$ which remove the middle $\frac{1}{3},\frac{3}{5}$ or $\frac{5}{7}$ of each interval respectively. Note that if we exclusively use the $\gen_{3}$ generator we produce the usual Cantor middle-third set, which has lower and upper box-counting dimension $\frac{\log\left(2\right)}{\log\left(3\right)}$ and if we exclusively use the $\gen_{7}$ generator we produce a similar Cantor set with lower and upper box-counting dimension $\frac{\log\left(2\right)}{\log\left(7\right)}$. By switching generators at certain stages of our construction we can cause $\frac{\log\left(N\left(F,\delta\right)\right)}{-\log\delta}$ to oscillate between these values, providing that we apply the generators a sufficiently large number of times.

\paragraph*{}
To simplify notation, we choose a sequence of integers $K_{j}=10^{2^{j}}$ which increases sufficiently quickly that
\begin{align}
\sum_{i=0}^{j} K_{i} &< K_{j+1} \label{Kj upper bound}\\
\sum_{i=0}^{j} K_{i} - K_{i-1} &>K_{j-1} \label{Kj lower bound}\\
\frac{\log\left(7\right)}{\log\left(3\right)}K_{j}&<K_{j+1}\label{Kj log3log7}
\intertext{and}
\frac{\sum_{i=0}^{j-1} K_{i}}{K_{j}} &\rightarrow 0 \quad \text{as $j\rightarrow \infty$} \label{Kj ratio limit}
\end{align}

\section{Constructing sets $F$ and $G$}
We construct two sets $F$ and $G$ using the following iterative procedure: For $F$ apply the following generator at the $j^{\text{th}}$ stage
\[
\begin{cases}
\gen_{3} & K_{6n}<j\leq K_{6n+1}\quad \text{for some} \quad n\in\mathbb{N}\\
\gen_{7} &K_{6n+1}<j\leq K_{6n+2}\quad \text{for some} \quad n\in\mathbb{N}\\
\gen_{5} & \text{otherwise},
\end{cases}
\]
and for $G$ apply the following generator at the $j^{\text{th}}$ stage
\[
\begin{cases}
\gen_{3} & K_{6m+3}<j\leq K_{6m+4}\quad \text{for some} \quad m\in\mathbb{N}\\
\gen_{7} & K_{6m+4}<j\leq K_{6m+5}\quad \text{for some} \quad m\in\mathbb{N}\\
\gen_{5} & \text{otherwise}.
\end{cases}
\]
Let $f_{3}\left(j\right)$ be the number of times $\gen_{3}$ has been applied and $f_{7}\left(j\right)$ the number of times $\gen_{7}$ has been applied in the construction $F$ by stage $j$. With this notation $\gen_{5}$ has been applied $j-f_{3}\left(j\right)-f_{7}\left(j\right)$ times by stage $j$. Similarly define $g_{3}\left(j\right)$ and $g_{7}\left(j\right)$ for the construction of $G$. Clearly these functions are non-decreasing and we refrain from writing them explicitly except to note that
\begin{equation}\label{explicit f(Kj)}
\begin{aligned}
f_{3}\left(K_{6n+1}\right)&=\sum_{i=0}^{n} K_{6i+1} - K_{6i} & 
f_{7}\left(K_{6n+2}\right)&=\sum_{i=0}^{n} K_{6i+2} - K_{6i+1}\\
g_{3}\left(K_{6m+4}\right)&=\sum_{i=0}^{m} K_{6i+4} - K_{6i+3} & 
g_{7}\left(K_{6m+5}\right)&=\sum_{i=0}^{m} K_{6i+5} - K_{6i+4}
\end{aligned}
\end{equation}
and
\begin{equation}\label{f constant on intervals}
\begin{split}
f_{3}\left(j\right)=f_{3}\left(K_{6n+1}\right) \quad\text{for}\quad K_{6n+1}<j\leq K_{6n+6}\\
f_{7}\left(j\right)=f_{7}\left(K_{6n+2}\right) \quad\text{for}\quad K_{6n+2}<j\leq K_{6n+7}\\
g_{3}\left(j\right)=g_{3}\left(K_{6m+3}\right) \quad\text{for}\quad K_{6m+3}<j\leq K_{6m+8}\\
g_{7}\left(j\right)=g_{7}\left(K_{6m+3}\right) \quad\text{for}\quad K_{6m+4}<j\leq K_{6m+9}
\end{split}
\end{equation}

Denote the sets at the $j^{th}$ stage of the construction of $F$ and $G$ by 
\begin{itemize}
\item[] $F_{j}$, which consists of $2^{j}$ intervals of length $3^{-f_{3}\left(j\right)}7^{-f_{7}\left(j\right)}5^{-j+f_{3}\left(j\right)+f_{7}\left(j\right)}$ and
\item[] $G_{j}$, which consists of $2^{j}$ intervals of length $3^{-g_{3}\left(j\right)}7^{-g_{7}\left(j\right)}5^{-j+g_{3}\left(j\right)+g_{7}\left(j\right)}$
\end{itemize}
so that $F$ and $G$ are defined by $F=\bigcap F_{j}$ and $G=\bigcap G_{j}$. Note that for every $j$ the endpoints of the intervals in $F_{j}$ and $G_{j}$ are in $F$ and $G$ respectively as the generators only remove the middle of each interval.
\begin{proposition}\label{N=M=2j prop}
For $\delta$ such that
\begin{equation}\label{length-scale for stage j}
3^{-f_{3}\left(j\right)}7^{-f_{7}\left(j\right)}5^{-j+f_{3}\left(j\right)+f_{7}\left(j\right)}\leq\delta< 3^{-f_{3}\left(j-1\right)}7^{-f_{7}\left(j-1\right)}5^{-\left(j-1\right)+f_{3}\left(j-1\right)+f_{7}\left(j-1\right)}
\end{equation}
we have $N\left(F,\delta\right)=M\left(F,\delta\right)=2^{j}$.
\end{proposition}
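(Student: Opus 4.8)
The plan is to prove the single chain of inequalities
$2^{j}\leq M\left(F,\delta\right)\leq N\left(F,\delta\right)\leq 2^{j}$,
which pins both quantities to $2^{j}$. The middle inequality $M\left(F,\delta\right)\leq N\left(F,\delta\right)$ holds in complete generality: if $\set{x_{k}}$ are the centres of a disjoint family of closed balls of diameter $\delta$, then in $\R$ these are intervals of length $\delta$, so disjointness forces $\abs{x_{k}-x_{k'}}>\delta$ for $k\neq k'$. No member of a $\delta$-cover, having diameter at most $\delta$, can then contain two of the centres, and since the centres lie in $F$ and hence in the union of the cover, the cover must have at least $M\left(F,\delta\right)$ elements. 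For the upper bound $N\left(F,\delta\right)\leq 2^{j}$ I would use the stage-$j$ set directly: $F\subset F_{j}$, and $F_{j}$ consists of $2^{j}$ intervals each of diameter $3^{-f_{3}\left(j\right)}7^{-f_{7}\left(j\right)}5^{-j+f_{3}\left(j\right)+f_{7}\left(j\right)}\leq\delta$ by the left-hand inequality of \eqref{length-scale for stage j}, so these intervals already form a $\delta$-cover.

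The substance is the lower bound $M\left(F,\delta\right)\geq 2^{j}$, and the idea is to place the centres not at stage $j$ but at stage $j-1$. Writing $\ell_{k}$ for the common length of the $2^{k}$ intervals of $F_{k}$, I would take the $2^{j}$ endpoints of the $2^{j-1}$ intervals making up $F_{j-1}$; these all lie in $F$ because the generators only remove the middle of each interval. Listed in increasing order, two consecutive such endpoints are separated either by $\ell_{j-1}$ (the two ends of a single stage-$\left(j-1\right)$ interval) or by the gap between two neighbouring stage-$\left(j-1\right)$ intervals. The right-hand inequality of \eqref{length-scale for stage j} gives $\delta<\ell_{j-1}$, so the task reduces to showing that every gap between consecutive intervals of $F_{j-1}$ is also at least $\ell_{j-1}$; granting this, all $2^{j}$ centres are pairwise more than $\delta$ apart, the closed balls of diameter $\delta$ about them are disjoint, and $M\left(F,\delta\right)\geq 2^{j}$ follows.

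This separation estimate is the one point requiring care, and it is where the choice of generators $\gen_{3},\gen_{5},\gen_{7}$ enters. When a parent interval of length $L$ is split by $\gen_{p}$ (with $p\geq 3$) into two children of length $L/p$ sitting at its two ends, the removed middle has length $L\left(p-2\right)/p=\left(L/p\right)\left(p-2\right)\geq L/p$, so every gap created is at least as long as the child intervals flanking it. Because children always occupy the extreme ends of their parents, the gap between two adjacent intervals of $F_{j-1}$ equals the gap created at the stage $i\leq j-1$ at which their lineages first diverge, and this is at least the stage-$i$ interval length $\ell_{i}$, which is at least $\ell_{j-1}$ since the lengths $\ell_{k}$ are decreasing in $k$. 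Hence every gap exceeds $\delta$, as required. I expect this bookkeeping of gaps against interval lengths to be the main obstacle; once it is in place the three inequalities combine to give $N\left(F,\delta\right)=M\left(F,\delta\right)=2^{j}$.
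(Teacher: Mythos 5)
Your proof is correct, and while it uses the same two explicit constructions as the paper (the cover of $F$ by the $2^{j}$ intervals of $F_{j}$ for $N\left(F,\delta\right)\leq 2^{j}$, and disjoint balls centred at the $2^{j}$ endpoints of the intervals of $F_{j-1}$ for $M\left(F,\delta\right)\geq 2^{j}$), its logical organisation is genuinely different. The paper proves four inequalities: in addition to the two above it shows $N\left(F,\delta\right)\geq 2^{j}$ directly (a set of diameter $\delta$ meets at most one interval of $F_{j-1}$ and cannot cover both of its stage-$j$ children) and $M\left(F,\delta\right)\leq 2^{j}$ directly (three centres in one interval of $F_{j-1}$ would force two centres into a single stage-$j$ interval of length at most $\delta$, contradicting disjointness). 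You replace both of these with the single general comparison $M\left(F,\delta\right)\leq N\left(F,\delta\right)$, valid for subsets of $\R$ since disjointness of closed intervals of length $\delta$ forces their centres more than $\delta$ apart, so no cover element of diameter at most $\delta$ contains two centres; this closes the chain $2^{j}\leq M\left(F,\delta\right)\leq N\left(F,\delta\right)\leq 2^{j}$ with one fewer geometric argument. You also make explicit the gap estimate that the paper justifies only by a figure: each generator $\gen_{p}$ leaves a gap of $\left(p-2\right)/p$ times the parent length, which is at least the child length, and these gaps persist to stage $j-1$ because children occupy the extreme ends of their parents. That detail is exactly what is needed to see that endpoints belonging to \emph{different} adjacent intervals of $F_{j-1}$ are also more than $\delta$ apart, so supplying it is a genuine gain in rigour over the paper's presentation.
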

We refer to those $\delta$ in the range \eqref{length-scale for stage j} as length-scales corresponding to stage $j$ in the construction of $F$. Clearly every $1>\delta>0$ is a length scale corresponding to exactly one stage $j_{\delta}$ and $j_{\delta}\rightarrow\infty$ as $\delta\rightarrow 0$. We refer to length-scales corresponding to the construction of $G$ in an analogous fashion.

\begin{proof}
For $\delta$ in the range \eqref{length-scale for stage j} the obvious cover consisting of all intervals in $F_{j}$ gives $N\left(F,\delta\right)\leq 2^{j}$. The opposite inequality comes from the fact that a set with diameter $\delta$ in this range intersects at most one $\left(j-1\right)^{\text{th}}$ stage interval $I$ but cannot cover both $j^{\text{th}}$ stage subintervals of $I$ (see figure \ref{figure - 2jballsneeded}) so at least $2\times 2^{j-1}=2^{j}$ elements are needed to form a cover of $F$ .\\
Next, $\delta$ in the range \eqref{length-scale for stage j} is less than the length of the intervals in $F_{j-1}$ so balls of diameter $\delta$ centred on the end points of the intervals of $F_{j-1}$ are disjoint and have centres in $F$ (see figure \ref{figure - 2jdisjointballscover}). This gives two disjoint balls for each interval, hence $M\left(F,\delta\right)\geq 2\times 2^{j-1}=2^{j}$.
For the opposite inequality suppose for a contradiction that $M\left(F,\delta\right)> 2^{j}$, then at least one of the $2^{j-1}$ intervals in $F_{j-1}$ contains the centres of least three disjoint balls with centres in $F$. Let $I$ be such an interval. At the next stage of the construction $I$ is split into two sub-intervals, one of which contains the centres of at least two of these three disjoint balls. However, this $j^{\text{th}}$ stage subinterval has length no greater than $\delta$ so two closed balls of diameter $\delta$ centred in this interval cannot be disjoint (see figure \ref{figure - 2jdisjointballsmax}), which is a contradiction.
\end{proof}
\begin{figure}
   \vspace*{8pt}
\includegraphics[scale=0.7]{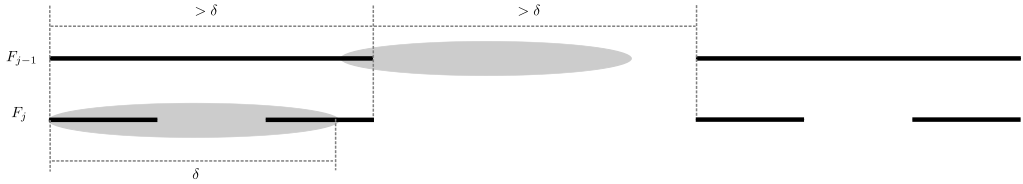}
   \vspace*{8pt}
\caption{A section of the sets $F_{j-1}$ and $F_{j}$ (shown in black) to illustrate that each set (shown as grey ellipses) with diameter $\delta<3^{-f_{3}\left(j-1\right)}7^{-f_{7}\left(j-1\right)}5^{-\left(j-1\right)+f_{3}\left(j-1\right)+f_{7}\left(j-1\right)}$ (i.e. less than the length of the intervals of $F_{j-1}$) can neither intersect two intervals of $F_{j-1}$ nor cover two intervals of $F_{j}$.}
\label{figure - 2jballsneeded}
\end{figure}
\begin{figure}
   \vspace*{8pt}
\includegraphics[scale=0.7]{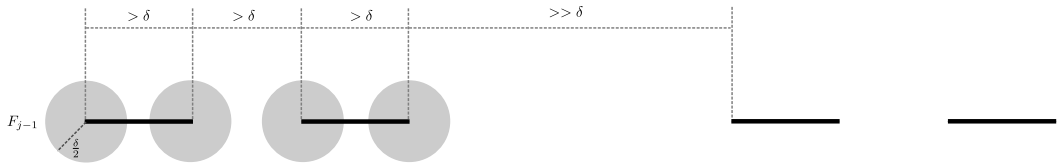}
   \vspace*{8pt}
\caption{A section of the set $F_{j-1}$ (shown in black) to illustrate that balls (shown in grey) with diameter $\delta<3^{-f_{3}\left(j-1\right)}7^{-f_{7}\left(j-1\right)}5^{-\left(j-1\right)+f_{3}\left(j-1\right)+f_{7}\left(j-1\right)}$ (i.e. less than the length of the intervals of $F_{j-1}$) with centres the endpoints of the intervals $F_{j-1}$ are disjoint, giving two disjoint balls for each interval of $F_{j-1}$.}
\label{figure - 2jdisjointballscover}
\end{figure}
\begin{figure}
   \vspace*{8pt}
\includegraphics[scale=0.7]{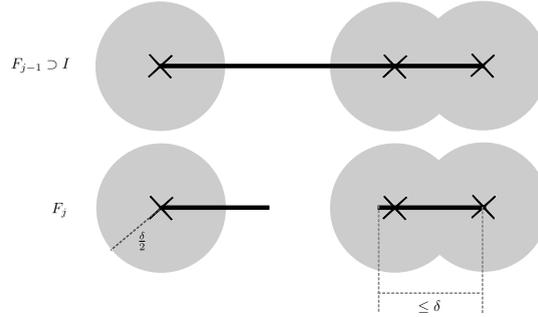}
   \vspace*{8pt}
\caption{A sub-interval $I\subset F_{j-1}$ (shown in black) which contains the centres of three balls (shown in grey) with diameter $\delta\geq 3^{-f_{3}\left(j\right)}7^{-f_{7}\left(j\right)}5^{-j+f_{3}\left(j\right)+f_{7}\left(j\right)}$ (i.e. greater than the the length of the intervals of $F_{j}$) with centres in $F$. As the centres are also contained in $F_{j}$ (also shown in black) at least one interval of $F_{j}$ contains the centres of two of these balls. Consequently, the distance between the centres is at most the length of an interval of $F_{j}$ which is at most $\delta$ which is the sum of the radii of the closed balls so the two balls are not disjoint.}
\label{figure - 2jdisjointballsmax}
\end{figure}
Adapting the argument we can prove a similar proposition for the set $G$.
\begin{proposition}\label{N=M=2j prop G}
For $\delta$ such that
\begin{equation}
3^{-g_{3}\left(j\right)}7^{-g_{7}\left(j\right)}5^{-j+g_{3}\left(j\right)+g_{7}\left(j\right)}\leq\delta< 3^{-g_{3}\left(j-1\right)}7^{-g_{7}\left(j-1\right)}5^{-\left(j-1\right)+g_{3}\left(j-1\right)+g_{7}\left(j-1\right)}
\end{equation}
we have $N\left(G,\delta\right)=M\left(G,\delta\right)=2^{j}$.
\end{proposition}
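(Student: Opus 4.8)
The plan is to observe that the proof of Proposition~\ref{N=M=2j prop} nowhere uses the particular schedule of generators chosen for $F$; it relies only on the generic structure of the Cantor-like construction. Concretely, the argument uses just three facts: that $G_{j}$ consists of $2^{j}$ disjoint closed intervals, that each such interval has the common length $3^{-g_{3}\left(j\right)}7^{-g_{7}\left(j\right)}5^{-j+g_{3}\left(j\right)+g_{7}\left(j\right)}$ recorded in the definition of $G_{j}$, and that each interval at stage $j$ splits into exactly two subintervals at stage $j+1$ whose endpoints lie in $G$ (since every generator removes only the middle portion of an interval). All three facts hold for $G$ by construction, so the four estimates of Proposition~\ref{N=M=2j prop} transfer with the substitutions $F\mapsto G$, $F_{j}\mapsto G_{j}$ and $\left(f_{3},f_{7}\right)\mapsto\left(g_{3},g_{7}\right)$.

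I would therefore reproduce the four steps in turn. For the upper bound $N\left(G,\delta\right)\leq 2^{j}$, the cover of $G$ by the $2^{j}$ intervals of $G_{j}$ suffices because $\delta$ in the stated range is at least their common length. For $N\left(G,\delta\right)\geq 2^{j}$, a set of diameter $\delta$ is strictly shorter than an interval of $G_{j-1}$, so it meets at most one such interval and cannot cover both of its two subintervals in $G_{j}$; hence at least $2\cdot 2^{j-1}=2^{j}$ sets are needed. For $M\left(G,\delta\right)\geq 2^{j}$, the closed balls of diameter $\delta$ centred at the two endpoints of each interval of $G_{j-1}$ are disjoint, again because $\delta$ is smaller than the length of those intervals. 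For $M\left(G,\delta\right)\leq 2^{j}$, the same pigeonhole contradiction applies: were there more than $2^{j}$ disjoint balls, some interval of $G_{j-1}$ would contain at least three centres, forcing two of them into a single subinterval of $G_{j}$ of length at most $\delta$, which is incompatible with the disjointness of closed balls of diameter $\delta$.

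Since there is genuinely no new content, I do not anticipate a real obstacle; the only point requiring care is the bookkeeping. One must confirm that the two ends of the length range play exactly the roles their $F$-counterparts did in Proposition~\ref{N=M=2j prop}: the lower bound $3^{-g_{3}\left(j\right)}7^{-g_{7}\left(j\right)}5^{-j+g_{3}\left(j\right)+g_{7}\left(j\right)}\leq\delta$ drives the covering and upper estimates, while the upper bound $\delta<3^{-g_{3}\left(j-1\right)}7^{-g_{7}\left(j-1\right)}5^{-\left(j-1\right)+g_{3}\left(j-1\right)+g_{7}\left(j-1\right)}$ drives the disjointness and lower estimates. As the generator schedules for $F$ and $G$ differ only in which stages receive $\gen_{3}$ or $\gen_{7}$, and the combinatorial argument is insensitive to this choice, the proof is complete once these substitutions are made.
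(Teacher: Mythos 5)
Your proposal is correct and takes exactly the paper's route: the paper gives no separate argument for $G$, remarking only that the proof of Proposition~\ref{N=M=2j prop} adapts, and your substitution $F\mapsto G$, $F_{j}\mapsto G_{j}$, $\left(f_{3},f_{7}\right)\mapsto\left(g_{3},g_{7}\right)$ is precisely that adaptation, with the roles of the two ends of the length-scale range correctly identified.
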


By taking logarithms we obtain the following more useful form of propositions \ref{N=M=2j prop} and \ref{N=M=2j prop G}
\begin{multline}\label{delta implies N(F,delta)}
f_{3}\left(j-1\right)\left[\log\left(3\right)-\log\left(5\right)\right]+f_{7}\left(j-1\right)\left[\log\left(7\right)-\log\left(5\right)\right] + \left(j-1\right)\log\left(5\right)\\
< -\log\delta \leq f_{3}\left(j\right)\left[\log\left(3\right)-\log\left(5\right)\right]+f_{7}\left(j\right)\left[\log\left(7\right)-\log\left(5\right)\right] + j\log\left(5\right)\\
\Rightarrow \log\left(N\left(F,\delta\right)\right) = \log\left(M\left(F,\delta\right)\right) = j\log\left(2\right)
\end{multline}
and
\begin{multline}\label{delta implies N(G,delta)}
g_{3}\left(j-1\right)\left[\log\left(3\right)-\log\left(5\right)\right]+g_{7}\left(j-1\right)\left[\log\left(7\right)-\log\left(5\right)\right] + \left(j-1\right)\log\left(5\right)\\
< -\log\delta \leq g_{3}\left(j\right)\left[\log\left(3\right)-\log\left(5\right)\right]+g_{7}\left(j\right)\left[\log\left(7\right)-\log\left(5\right)\right] + j\log\left(5\right)\\
\Rightarrow \log\left(N\left(G,\delta\right)\right) = \log\left(M\left(G,\delta\right)\right) = j\log\left(2\right)
\end{multline}

\paragraph*{}
The essential feature of our construction is that the sets $F$ and $G$ at some length-scales look like the Cantor middle-third set, while at other length-scales look like the Cantor middle-$\frac{5}{7}^{\text{th}}$ set. Further, this `local' behaviour is maintained over sufficient length-scales that the box-counting limits of $F$ and $G$ at these length-scales approach the box-counting dimensions of the relevant Cantor set, which we will establish in the following section. We conclude this section by proving that at any length-scale the sets $F$ and $G$ do not both look like the middle-third set nor do they both look like the middle-$\frac{5}{7}^{\text{th}}$ set.

\begin{lemma}\label{delta not in both length-scales upper}
No $\delta$ is both a length-scale corresponding to some stage in $\left(K_{6n},K_{6n+2}\right]$ in the construction of $F$ for some $n\in\mathbb{N}$ and a length-scale corresponding to some stage in $\left(K_{6m+3},K_{6m+5}\right]$ in the construction of $G$ for some $m\in\mathbb{N}$.
\end{lemma}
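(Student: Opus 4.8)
The plan is to reduce both length-scale conditions to inequalities on the single quantity $x := -\log\delta$ and to show the two admissible sets of $x$-values are disjoint. Guided by \eqref{delta implies N(F,delta)}, set
\[
\Phi_F(j) := f_3(j)\left[\log 3 - \log 5\right] + f_7(j)\left[\log 7 - \log 5\right] + j\log 5,
\]
and define $\Phi_G$ in the same way from \eqref{delta implies N(G,delta)}. Then $\delta$ is a length-scale corresponding to stage $j$ in the construction of $F$ exactly when $\Phi_F(j-1) < x \le \Phi_F(j)$, and similarly for $G$. Two elementary facts about these functions will do all the work. First, advancing the construction by one stage applies some generator and so increases $\Phi_F$ by one of $\log 3,\log 5,\log 7$, all positive; hence $\Phi_F$ and $\Phi_G$ are strictly increasing in $j$. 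Second, $\Phi_F(j)$ is a nonnegative combination of $\log 3,\log 5,\log 7$ with weights $f_3(j),f_7(j),j-f_3(j)-f_7(j)$ that sum to $j$, so since $\log 3\le\log 5\le\log 7$ we obtain the crude uniform bounds $j\log 3\le\Phi_F(j)\le j\log 7$, and identically for $\Phi_G$.

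Next I would push the conditions out to the endpoints of the relevant stage-intervals. If $\delta$ is a length-scale for $F$ at some stage $j\in(K_{6n},K_{6n+2}]$, then $j$ is an integer exceeding $K_{6n}$, so $j-1\ge K_{6n}$ and $j\le K_{6n+2}$; monotonicity then yields $\Phi_F(K_{6n}) < x \le \Phi_F(K_{6n+2})$, and likewise $\Phi_G(K_{6m+3}) < x \le \Phi_G(K_{6m+5})$ for $G$. Suppose, for a contradiction, that a single $\delta$ satisfies both. I would then split on whether $m\ge n$ or $m<n$.

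If $m\ge n$ then $K_{6m+3}\ge K_{6n+3}$, and combining monotonicity, the lower bound $\Phi_G(j)\ge j\log 3$, the growth condition \eqref{Kj log3log7} at index $6n+2$, and the upper bound $\Phi_F(j)\le j\log 7$ gives
\[
\Phi_G(K_{6m+3}) \ge \Phi_G(K_{6n+3}) \ge K_{6n+3}\log 3 > K_{6n+2}\log 7 \ge \Phi_F(K_{6n+2}),
\]
which is incompatible with $x>\Phi_G(K_{6m+3})$ and $x\le\Phi_F(K_{6n+2})$. If instead $m<n$, then $m\le n-1$ forces $n\ge 1$ and $K_{6m+5}\le K_{6n-1}$, and the mirror-image chain
\[
\Phi_G(K_{6m+5}) \le \Phi_G(K_{6n-1}) \le K_{6n-1}\log 7 < K_{6n}\log 3 \le \Phi_F(K_{6n}),
\]
now using \eqref{Kj log3log7} at index $6n-1$, contradicts $x\le\Phi_G(K_{6m+5})$ and $x>\Phi_F(K_{6n})$. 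Either case is absurd, so no such $\delta$ exists.

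The feature that makes this painless is that I never need the exact values of the counters $f_3,f_7,g_3,g_7$: the trivial two-sided bound $j\log 3\le\Phi\le j\log 7$ already leaves room, because successive $K_j$ differ by a squaring and \eqref{Kj log3log7} records precisely that $K_{j+1}$ outgrows $K_j$ by more than the factor $\log 7/\log 3$. The only genuinely fiddly part I anticipate is the bookkeeping of the reduction to endpoints — in particular using integrality to pass from $j>K_{6n}$ to $j-1\ge K_{6n}$ — together with keeping the shifted indices $6n+3$ and $6n-1$ in the two cases straight.
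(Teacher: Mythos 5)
Your proof is correct, and it takes a genuinely leaner route than the paper's. Both arguments open with the same move --- bounding the endpoint length-scale of stage $j$ between $7^{-j}$ and $3^{-j}$ (your $j\log 3\le\Phi(j)\le j\log 7$) --- but the paper uses this only to force $m=n$ via \eqref{Kj log3log7}, and then in that remaining case it substitutes the explicit counter values \eqref{explicit f(Kj)}, invokes the additional growth conditions \eqref{Kj upper bound} and \eqref{Kj lower bound} to control $g_{3}-f_{3}$ and $f_{7}-g_{7}$, and only then reaches a contradiction with \eqref{Kj log3log7}. You instead observe that, since $\Phi$ is strictly increasing and sandwiched as above, the admissible values of $x=-\log\delta$ for stages up to $K_{p}$ are entirely separated from those for stages beyond $K_{p+1}$, precisely because \eqref{Kj log3log7} says $K_{p}\log 7<K_{p+1}\log 3$; your two cases $m\ge n$ (using the gap between $K_{6n+2}$ and $K_{6n+3}$) and $m<n$ (using the gap between $K_{6n-1}$ and $K_{6n}$) then dispose of everything uniformly, including the $m=n$ case that costs the paper most of its effort. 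What this buys is that neither the explicit formulas \eqref{explicit f(Kj)} nor the conditions \eqref{Kj upper bound} and \eqref{Kj lower bound} are needed for this lemma at all: only \eqref{Kj log3log7}, monotonicity of $\Phi$, and integrality of the stage index (to pass from $j>K_{6n}$ to $j-1\ge K_{6n}$) do any work. All the steps check out, including the observation that $m<n$ forces $n\ge 1$ so that $K_{6n-1}$ is defined.
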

\begin{proof}
Assume for a contradiction that $\delta$ is such a length-scale, that is
\begin{multline}\label{fg inequality 1}
3^{-f_{3}\left(K_{6n+2}\right)}7^{-f_{7}\left(K_{6n+2}\right)}5^{-K_{6n+2}+f_{3}\left(K_{6n+2}\right)+f_{7}\left(K_{6n+2}\right)}\\ \leq \delta < 3^{-f_{3}\left(K_{6n}\right)}7^{-f_{7}\left(K_{6n}\right)}5^{-K_{6n}+f_{3}\left(K_{6n}\right)+f_{7}\left(K_{6n}\right)}
\end{multline}
and
\begin{multline}\label{fg inequality 2}
3^{-g_{3}\left(K_{6m+5}\right)}7^{-g_{7}\left(K_{6m+5}\right)}5^{-K_{6m+5}+g_{3}\left(K_{6m+5}\right)+g_{7}\left(K_{6m+5}\right)}\\ \leq \delta < 3^{-g_{3}\left(K_{6m+3}\right)}7^{-g_{7}\left(K_{6m+3}\right)}5^{-K_{6m+3}+g_{3}\left(K_{6m+3}\right)+g_{7}\left(K_{6m+3}\right)}
\end{multline}
We first demonstrate that this could only hold if $n=m$. From \eqref{fg inequality 1} and \eqref{fg inequality 2} we have $7^{-K_{6n+2}} \leq \delta < 3^{-K_{6n}}$ and $7^{-K_{6m+5}} \leq \delta < 3^{-K_{6m}}$ respectively, which in turn yield $7^{-K_{6n+2}} < 3^{-K_{6m}}$ and $7^{-K_{6m+5}} < 3^{-K_{6n}}$. Taking logarithms we get
\[
K_{6m} < K_{6n+2}\frac{\log\left(7\right)}{\log\left(3\right)}< K_{6n+3}\quad\text{and}\quad K_{6n} < K_{6m+5}\frac{\log\left(7\right)}{\log\left(3\right)}< K_{6m+6}
\]
where the final inequalities follow from the growth property \eqref{Kj log3log7}. We conclude that $6m< 6n+3$ and $6n< 6m+6$, which implies that $m=n$. With this restriction, if $\delta$ is such a length-scale then the lower bound from \eqref{fg inequality 1} and the upper bound from \eqref{fg inequality 2} imply
\begin{multline*}
3^{-f_{3}\left(K_{6n+2}\right)}7^{-f_{7}\left(K_{6n+2}\right)}5^{-K_{6n+2}+f_{3}\left(K_{6n+2}\right)+f_{7}\left(K_{6n+2}\right)} \\ 
< 3^{-g_{3}\left(K_{6n+3}\right)}7^{-g_{7}\left(K_{6n+3}\right)}5^{-K_{6n+3}+g_{3}\left(K_{6n+3}\right)+g_{7}\left(K_{6n+3}\right)}
\end{multline*}
For clarity we suppress the argument of the functions and take logarithms which yields
\begin{equation}\label{supressed argument condition}
\left[\log\left(5\right)-\log\left(3\right)\right]\left(f_{3}-g_{3}\right) + \left[\log\left(5\right)-\log\left(7\right)\right]\left(f_{7}-g_{7}\right) + \log\left(5\right)\left(K_{6n+3}-K_{6n+2}\right) < 0.
\end{equation}
The bounds in \eqref{Kj upper bound} and \eqref{Kj lower bound} give
\begin{align}
g_{3}\left(K_{6n+3}\right)-f_{3}\left(K_{6n+2}\right) &= \sum_{i=0}^{n-1} K_{6i+4} - K_{6i+3} - \sum_{i=0}^{n} K_{6i+1}-K_{6i}\notag\\
&< K_{6n-1} - K_{6n} < 0 \label{g3-f3}
\intertext{and}
f_{7}\left(K_{6n+2}\right)-g_{7}\left(K_{6n+3}\right) &= \sum_{i=0}^{n} K_{6i+2} - K_{6i+1} - \sum_{i=0}^{n-1} K_{6i+5} - K_{6i+4}\notag\\
&< K_{6n+3} - K_{6n-2}. \label{f7-g7}
\end{align}
Consequently we drop the positive first term of \eqref{supressed argument condition} which implies
\begin{align*}
\left[\log\left(5\right)-\log\left(7\right)\right]\left(K_{6n+3} - K_{6n-2}\right) + \log\left(5\right)\left(K_{6n+3}-K_{6n+2}\right) &< 0
\intertext{which is that}
\left[2\log\left(5\right)-\log\left(7\right)\right]K_{6n+3} + \left[\log\left(7\right)-\log\left(5\right)\right]K_{6n-2} - \log\left(5\right)K_{6n+2} &< 0
\end{align*}
After dropping the positive middle term and rearranging we get
\begin{align*}
K_{6n+3}&< \frac{\log\left(5\right)}{\left[2\log\left(5\right)-\log\left(7\right)\right]}K_{6n+2}
\intertext{so that}
K_{6n+3}&< \frac{\log\left(7\right)}{\log\left(3\right)}K_{6n+2}
\end{align*}
however, by condition \eqref{Kj log3log7} the right hand side is less than $K_{6n+3}$ giving the required contradiction.
\end{proof}
Consequently, at any length-scale the sets $F$ and $G$ do not both look like the Cantor middle-third set. This lemma gives the following useful corollary:

\begin{corollary}\label{subsequence corollary upper}
Every sequence $\set{\delta_{i}}$ with $\delta_{i}\rightarrow 0$ either contains a subsequence $\set{\delta_{i_{n}}}$ with each $\delta_{i_{n}}$ corresponding to some stage $j_{\delta_{i_{n}}}\in \left(K_{6n+2},K_{6n+6}\right]$ in the construction of $F$ or contains a subsequence $\set{\delta_{i_{m}}}$ corresponding to some stage $j_{\delta_{i_{m}}}\in \left(K_{6m+5},K_{6m+9}\right]$ in the construction of $G$.
\end{corollary}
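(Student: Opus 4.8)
The plan is to argue by contradiction and reduce the statement to a single application of Lemma~\ref{delta not in both length-scales upper}. The first thing I would record is that the stage-ranges named in the corollary are precisely the $\gen_{5}$-ranges of the two constructions: in $F$ the generator $\gen_{5}$ is applied exactly for stages $j\in\left(K_{6n+2},K_{6n+6}\right]$, and in $G$ exactly for $j\in\left(K_{6m+5},K_{6m+9}\right]$, whereas the ranges $\left(K_{6n},K_{6n+2}\right]$ and $\left(K_{6m+3},K_{6m+5}\right]$ appearing in the Lemma are the complementary `active' ranges on which $\gen_{3}$ or $\gen_{7}$ is used. The structural fact I would emphasise is that, past an initial segment, each of these families tiles the integers: every integer greater than $K_{0}$ lies in exactly one of $\bigcup_{n}\left(K_{6n},K_{6n+2}\right]$ and $\bigcup_{n}\left(K_{6n+2},K_{6n+6}\right]$, and every integer greater than $K_{3}$ lies in exactly one of the analogous ranges for $G$.

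Next I would negate the conclusion. If neither subsequence existed, then only finitely many $\delta_{i}$ could have their $F$-stage $j_{\delta_{i}}$ in some $\left(K_{6n+2},K_{6n+6}\right]$, and only finitely many could have their $G$-stage in some $\left(K_{6m+5},K_{6m+9}\right]$; hence there is an index $I$ beyond which every $\delta_{i}$ avoids both of these $\gen_{5}$-ranges. Because $\delta_{i}\rightarrow 0$ and $j_{\delta}\rightarrow\infty$ as $\delta\rightarrow 0$, I may enlarge $I$ so that in addition the $F$-stage of each $\delta_{i}$ exceeds $K_{0}$ and its $G$-stage exceeds $K_{3}$.

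Fixing any single $i\geq I$, the tiling from the first step forces the $F$-stage of $\delta_{i}$ into some active range $\left(K_{6n},K_{6n+2}\right]$ and, simultaneously, its $G$-stage into some active range $\left(K_{6m+3},K_{6m+5}\right]$. This is exactly the configuration ruled out by Lemma~\ref{delta not in both length-scales upper}, so this single $\delta_{i}$ furnishes the required contradiction.

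I expect the only genuine obstacle to be bookkeeping rather than ideas: one has to check carefully that the active and $\gen_{5}$ ranges really do tile the integers beyond $K_{0}$ (respectively $K_{3}$) with neither gaps nor overlaps, since it is this that licenses upgrading `avoids every $\gen_{5}$-range' to `lies in an active range'. Once the tiling is verified and the two `for all large $i$' conditions are aligned so that one common $\delta_{i}$ meets both, the appeal to the Lemma is immediate and requires no further estimates.
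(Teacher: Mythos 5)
Your argument is correct and is essentially the paper's own proof: both negate the conclusion to get a tail of the sequence avoiding the stated ranges, observe that such a $\delta_{i}$ must then lie in a range $\left(K_{6n},K_{6n+2}\right]$ for $F$ and simultaneously in $\left(K_{6m+3},K_{6m+5}\right]$ for $G$, and conclude by Lemma~\ref{delta not in both length-scales upper}. Your explicit verification that the active and $\gen_{5}$ ranges tile the integers beyond the initial segment is a point the paper leaves implicit, but it is the same argument.
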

\begin{proof}
If the sequence $\set{\delta_{i}}$ did not contain such a subsequence, then there is a $\Delta>0$ such that each $\delta_{i}<\Delta$ is neither a length-scale corresponding to some stage  $j\in\left(K_{6n+2},K_{6n+6}\right]$ in the construction of $F$ nor a length-scale corresponding to some stage $j\in \left(K_{6m+5},K_{6m+9}\right]$ in the construction of $G$. Consequently, $\delta_{i}$ is a length scale corresponding to a stage $j\in\left(K_{6n},K_{6n+2}\right]$ in the construction of $F$ and also a length-scale corresponding to a stage $j\in\left(K_{6m+3},K_{6m+5}\right]$ which, from lemma \ref{delta not in both length-scales upper}, is contradictory.
\end{proof}

The corresponding lemma that $F$ and $G$ do not both look like the Cantor middle-$\frac{5}{7}^{\text{th}}$ set at any length-scale and the corresponding corollary for subsequences are proved in a similar way.

\begin{lemma}\label{delta not in both length-scales lower}
No $\delta$ is both a length-scale corresponding to some stage in $\left(K_{6n+1},K_{6n+3}\right]$ in the construction of $F$ for some $n\in\mathbb{N}$ and a length-scale corresponding to some stage in $\left(K_{6m+4},K_{6m+6}\right]$ in the construction of $G$ for some $m\in\mathbb{N}$,
\end{lemma}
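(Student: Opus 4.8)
The plan is to mirror the proof of Lemma~\ref{delta not in both length-scales upper}, with every index advanced by one: the present hypothesis puts both $F$ and $G$ in their $\gen_{7}$-then-$\gen_{5}$ phase rather than their $\gen_{3}$-then-$\gen_{7}$ phase. I would assume for a contradiction that some $\delta$ is simultaneously a length-scale for a stage in $\left(K_{6n+1},K_{6n+3}\right]$ of $F$ and for a stage in $\left(K_{6m+4},K_{6m+6}\right]$ of $G$. Because the length-scale ranges for consecutive stages abut and the interval lengths decrease in $j$, the union of these ranges over a block of stages is a single interval; Proposition~\ref{N=M=2j prop} therefore reduces the hypothesis on $F$ to
\begin{multline*}
3^{-f_{3}\left(K_{6n+3}\right)}7^{-f_{7}\left(K_{6n+3}\right)}5^{-K_{6n+3}+f_{3}+f_{7}}\\
\leq \delta < 3^{-f_{3}\left(K_{6n+1}\right)}7^{-f_{7}\left(K_{6n+1}\right)}5^{-K_{6n+1}+f_{3}+f_{7}},
\end{multline*}
while Proposition~\ref{N=M=2j prop G} gives the analogous two-sided bound for $G$ at the stages $K_{6m+6}$ and $K_{6m+4}$.

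The key observation is that only the crudest form of these bounds is needed. Since in each product the three exponents sum to the relevant stage number, replacing every base by $7$ bounds the product below and replacing every base by $3$ bounds it above, giving $7^{-K_{6n+3}}\leq\delta<3^{-K_{6n+1}}$ for $F$ and $7^{-K_{6m+6}}\leq\delta<3^{-K_{6m+4}}$ for $G$. Pairing the lower bound for $F$ with the upper bound for $G$ yields $7^{-K_{6n+3}}<3^{-K_{6m+4}}$, so by the growth condition \eqref{Kj log3log7} one gets $K_{6m+4}<\frac{\log 7}{\log 3}K_{6n+3}<K_{6n+4}$, which forces $m<n$. Pairing the lower bound for $G$ with the upper bound for $F$ yields $7^{-K_{6m+6}}<3^{-K_{6n+1}}$, so $K_{6n+1}<\frac{\log 7}{\log 3}K_{6m+6}<K_{6m+7}$, which forces $n\leq m$. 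The two conclusions $m<n$ and $n\leq m$ are incompatible, so no such $\delta$ exists.

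Thus, in contrast to Lemma~\ref{delta not in both length-scales upper}, no fine estimate of the counting functions $f_{3},f_{7},g_{3},g_{7}$ is needed. The one point demanding care is the asymmetry of the index gaps: the first comparison pits $6m+4$ against $6n+4$ and so delivers the \emph{strict} inequality $m<n$, whereas the second pits $6n+1$ against $6m+7$ and delivers only $n\leq m$, and it is exactly this strict-versus-weak pairing that closes the contradiction. I expect this sharper bookkeeping to be the only subtlety. Had one instead weakened the bounds to the neighbouring multiples of six (as in the printed proof of the upper lemma) one would obtain merely $m=n$ and then have to combine the lower bound at the $F$-stage $K_{6n+3}$ with the upper bound at the $G$-stage $K_{6n+4}$ and reduce, as in \eqref{supressed argument condition}, to a contradiction with \eqref{Kj log3log7}; the obstacle in that longer route is that the sign of $g_{3}\left(K_{6n+4}\right)-f_{3}\left(K_{6n+3}\right)$ is now positive, of order $K_{6n+4}$, so the $\gen_{3}$-term can no longer be discarded but must be paired with $\log\left(5\right)\left(K_{6n+4}-K_{6n+3}\right)$.
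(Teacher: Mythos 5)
Your proof is correct, but it takes a genuinely shorter route than the one the paper intends: the paper gives no explicit proof of this lemma, only the instruction to argue as in Lemma~\ref{delta not in both length-scales upper}, whose proof has two stages (crude bounds forcing $m=n$, then a finer comparison of the counting functions via the analogue of \eqref{supressed argument condition}). You observe that if the crude bounds are anchored at the actual endpoints of the stage ranges --- $7^{-K_{6n+3}}\leq\delta<3^{-K_{6n+1}}$ for $F$ and $7^{-K_{6m+6}}\leq\delta<3^{-K_{6m+4}}$ for $G$, both legitimate since the three exponents in each interval length sum to the stage number --- then one application of \eqref{Kj log3log7} to each cross-pairing already gives $K_{6m+4}<\frac{\log 7}{\log 3}K_{6n+3}<K_{6n+4}$, hence $m<n$, and $K_{6n+1}<\frac{\log 7}{\log 3}K_{6m+6}<K_{6m+7}$, hence $n\leq m$, which is immediately contradictory; no estimate of $f_{3},f_{7},g_{3},g_{7}$ is needed. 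This is valid and cleaner: the paper's template weakens the upper bounds down to $3^{-K_{6n}}$ and $3^{-K_{6m}}$, which discards exactly the strictness your bookkeeping retains and so forces the second stage of the argument. Your closing remark also correctly diagnoses why a verbatim transcription of that second stage would fail here (the sign of $g_{3}\left(K_{6n+4}\right)-f_{3}\left(K_{6n+3}\right)$ is reversed relative to \eqref{g3-f3}, so the $\gen_{3}$ term cannot simply be dropped but must be offset against $\log\left(5\right)\left(K_{6n+4}-K_{6n+3}\right)$), which is further reason to prefer your direct index comparison.
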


\begin{corollary}\label{subsequence corollary lower}
Every sequence $\set{\delta_{i}}$ with $\delta_{i}\rightarrow 0$ either contains a subsequence $\set{\delta_{i_{n}}}$ with each $\delta_{i_{n}}$ corresponding to some stage $j_{\delta_{i_{n}}}\in \left(K_{6n+3},K_{6n+7}\right]$ in the construction of $F$ or contains a subsequence $\set{\delta_{i_{m}}}$ corresponding to some stage $j_{\delta_{i_{m}}}\in \left(K_{6m},K_{6m+4}\right]$ in the construction of $G$.
\end{corollary}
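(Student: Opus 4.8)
The plan is to imitate the proof of Corollary~\ref{subsequence corollary upper} almost verbatim, with Lemma~\ref{delta not in both length-scales lower} playing the role that Lemma~\ref{delta not in both length-scales upper} plays there. I would argue by contradiction: suppose $\set{\delta_{i}}$ contains neither a subsequence whose $F$-stages eventually lie in $\left(K_{6n+3},K_{6n+7}\right]$ nor a subsequence whose $G$-stages eventually lie in $\left(K_{6m},K_{6m+4}\right]$. The failure of both alternatives furnishes a threshold $\Delta>0$ beyond which \emph{every} term simultaneously avoids both families of ranges; that is, for each $\delta_{i}<\Delta$ the stage $j_{\delta_{i}}$ in the construction of $F$ lies outside every interval $\left(K_{6n+3},K_{6n+7}\right]$ and the corresponding stage in the construction of $G$ lies outside every interval $\left(K_{6m},K_{6m+4}\right]$.

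The key step is then to identify the complementary stage-ranges. Since $\delta_{i}\to 0$ the associated stages tend to infinity, so it suffices to work among large stages. Here I would observe that the half-open intervals $\left(K_{6n+1},K_{6n+3}\right]$ and $\left(K_{6n+3},K_{6n+7}\right]$ together partition $\left(K_{1},\infty\right)$, so any stage avoiding every interval of the second type must lie in some interval of the first type; likewise $\left(K_{6m},K_{6m+4}\right]$ and $\left(K_{6m+4},K_{6m+6}\right]$ partition $\left(K_{0},\infty\right)$, so any stage avoiding every $\left(K_{6m},K_{6m+4}\right]$ must lie in some $\left(K_{6m+4},K_{6m+6}\right]$. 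Thus each $\delta_{i}<\Delta$ is at once a length-scale corresponding to a stage in $\left(K_{6n+1},K_{6n+3}\right]$ for $F$ and a length-scale corresponding to a stage in $\left(K_{6m+4},K_{6m+6}\right]$ for $G$.

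This is exactly the configuration forbidden by Lemma~\ref{delta not in both length-scales lower}, yielding the required contradiction and completing the proof. I expect the only genuine subtlety to be the bookkeeping of the previous paragraph: one must check that the nominated pairs of $K$-intervals really do tile the relevant half-lines --- neither overlapping nor leaving gaps --- so that ``not in the first family'' is precisely equivalent to ``in the second family''. This reduces to tracking the index of $K$ modulo $6$, which is elementary; all of the analytic substance has already been absorbed into Lemma~\ref{delta not in both length-scales lower}.
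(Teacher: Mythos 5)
Your proposal is correct and follows exactly the route the paper intends: the paper proves Corollary~\ref{subsequence corollary upper} by observing that failure of both alternatives forces every small $\delta_{i}$ into the complementary stage-ranges forbidden by Lemma~\ref{delta not in both length-scales upper}, and states that Corollary~\ref{subsequence corollary lower} ``is proved in a similar way''. Your identification of the complementary ranges --- $\left(K_{6n+1},K_{6n+3}\right]$ for $F$ and $\left(K_{6m+4},K_{6m+6}\right]$ for $G$, which are precisely the ranges excluded by Lemma~\ref{delta not in both length-scales lower} --- is the correct bookkeeping, so nothing is missing.
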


\section{Calculating box-counting dimensions}
In order to establish the box-counting dimensions of $F$ and $G$ we need the following proposition on the behaviour of the generator-counting functions at the limit:

\begin{proposition}\label{f Kj ratio limit}
\[
\frac{f_{3}\left(K_{6n+l}\right)}{K_{6n+l}} \rightarrow
\begin{cases}
1 & l=1\\
0 & 0\leq l < 6,\; l\neq 1 
\end{cases}\quad
\frac{f_{7}\left(K_{6n+l}\right)}{K_{6n+l}} \rightarrow
\begin{cases}
1 & l=2\\
0 & 0\leq l < 6,\; l\neq 2 
\end{cases}
\]
\[
\frac{g_{3}\left(K_{6m+l}\right)}{K_{6m+l}} \rightarrow
\begin{cases}
1 & l=4\\
0 & 0\leq l < 6,\; l\neq 4 
\end{cases}\quad
\frac{g_{7}\left(K_{6m+l}\right)}{K_{6m+l}} \rightarrow
\begin{cases}
1 & l=5\\
0 & 0\leq l < 6,\; l\neq 5 
\end{cases}
\]
as $n,m\rightarrow \infty$.
\end{proposition}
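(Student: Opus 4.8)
The plan is to treat the twelve limits uniformly by splitting into two kinds of cases: the single value of $l$ at which each counting function tends to $1$ (namely the endpoint of its own ``active'' generator block), and the remaining values at which it tends to $0$. By the evident symmetry between the definitions of $F$ and $G$ (a shift of the block index by $3$) it suffices to treat $f_{3}$ and $f_{7}$; the statements for $g_{3}$ and $g_{7}$ then follow verbatim.

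For the limit-$1$ case, consider $f_{3}$ at $l=1$. Here I would start from the explicit formula \eqref{explicit f(Kj)}, writing
\[
f_{3}\left(K_{6n+1}\right) = \left(K_{6n+1}-K_{6n}\right) + \sum_{i=0}^{n-1}\left(K_{6i+1}-K_{6i}\right),
\]
and isolate the final block $K_{6n+1}-K_{6n}$. Dividing by $K_{6n+1}$ contributes $1-K_{6n}/K_{6n+1}$ from this term, and $K_{6n}/K_{6n+1}=10^{2^{6n}-2^{6n+1}}=10^{-2^{6n}}\to 0$. The remaining sum is at most $\sum_{i=0}^{6n-5}K_{i}$, which is $o\left(K_{6n+1}\right)$ by \eqref{Kj ratio limit} (equivalently, bounded by $K_{6n-4}$ via \eqref{Kj upper bound}), so the tail is negligible and the ratio tends to $1$. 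The computation for $f_{7}$ at $l=2$ is identical, with the dominant block $K_{6n+2}-K_{6n+1}$.

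For the limit-$0$ cases the key observation is that away from its active block each counting function is \emph{frozen}. By \eqref{f constant on intervals}, $f_{3}\left(K_{6n+l}\right)=f_{3}\left(K_{6n+1}\right)$ for $l=2,3,4,5$, while $f_{3}\left(K_{6n}\right)=f_{3}\left(K_{6\left(n-1\right)+1}\right)=f_{3}\left(K_{6n-5}\right)$; analogously $f_{7}$ is constant on its dormant range, giving $f_{7}\left(K_{6n}\right)=f_{7}\left(K_{6n+1}\right)=f_{7}\left(K_{6n-4}\right)$ and $f_{7}\left(K_{6n+l}\right)=f_{7}\left(K_{6n+2}\right)$ for $l=3,4,5$. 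In each such case I would bound the frozen numerator trivially by the stage at which it was last updated, using $f_{3}\left(j\right)\leq j$ and $f_{7}\left(j\right)\leq j$, and then divide by the strictly larger $K_{6n+l}$. For instance $f_{3}\left(K_{6n+l}\right)=f_{3}\left(K_{6n+1}\right)\leq K_{6n+1}$ for $l\geq 2$ yields $f_{3}\left(K_{6n+l}\right)/K_{6n+l}\leq K_{6n+1}/K_{6n+l}\to 0$, and $f_{3}\left(K_{6n}\right)\leq K_{6n-5}$ yields $f_{3}\left(K_{6n}\right)/K_{6n}\leq K_{6n-5}/K_{6n}\to 0$. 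Every such quotient has the form $K_{a}/K_{b}$ with $a<b$, which tends to $0$ by the doubly-exponential growth of $K_{j}$ (or again by \eqref{Kj ratio limit}).

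The work here is essentially bookkeeping rather than analysis: the only genuine estimate is the tail bound in the limit-$1$ case, handled directly by \eqref{Kj upper bound} and \eqref{Kj ratio limit}. The step I expect to require the most care is correctly identifying, for each $l$, the stage on which the relevant function is constant, that is, invoking \eqref{f constant on intervals} with the right value of $n$, since an off-by-one in the index ranges would assign the wrong frozen value. Once the constant-on-intervals ranges are pinned down, all twelve limits reduce to ratios $K_{a}/K_{b}$ with $a<b$ together with the single dominant-block computation, and the proposition follows.
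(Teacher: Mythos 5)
Your argument is correct and follows essentially the same route as the paper: the limit-$1$ cases via the explicit formula \eqref{explicit f(Kj)} with the tail killed by \eqref{Kj upper bound}/\eqref{Kj ratio limit}, and the limit-$0$ cases by freezing the counting function via \eqref{f constant on intervals} and comparing against the larger $K_{6n+l}$. The only (immaterial) difference is that you bound the frozen numerator by the crude estimate $f_{3}\left(j\right)\leq j$ to reduce everything to ratios $K_{a}/K_{b}$ with $a<b$, whereas the paper keeps the explicit sums and invokes \eqref{Kj ratio limit} directly.
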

\begin{proof}
From \eqref{explicit f(Kj)} we have
\[
\frac{f_{3}\left(K_{6n+1}\right)}{K_{6n+1}} = \frac{\sum_{i=0}^{n} K_{6i+1} - K_{6i}}{K_{6n+1}} = 1 + \frac{\sum_{i=0}^{n-1} K_{6i+1}}{K_{6n+1}} - \frac{\sum_{i=0}^{n} K_{6i}}{K_{6n+1}}
\]
which converges to 1 as $n\rightarrow \infty$ by \eqref{Kj ratio limit}.
By \eqref{f constant on intervals} we have $f_{3}\left(K_{6n+l}\right)=f_{3}\left(K_{6n+1}\right)$ for $2\leq l<6$ so
\[
\frac{f_{3}\left(K_{6n+l}\right)}{K_{6n+l}} = \frac{f_{3}\left(K_{6n+1}\right)}{K_{6n+l}} = \frac{\sum_{i=0}^{n} K_{6i+1} - K_{6i}}{K_{6n+l}} 
\]
which converges to 0 as $n\rightarrow \infty$ by \eqref{Kj ratio limit}. Similarly, $f_{3}\left(K_{6n+0}\right) = f_{3}\left(K_{6\left(n-1\right)+1}\right)$ so
\[
\frac{f_{3}\left(K_{6n+0}\right)}{K_{6n+0}} = \frac{f_{3}\left(K_{6\left(n-1\right)+1}\right)}{K_{6n}} = \frac{\sum_{i=0}^{n-1} K_{6i+1} - K_{6i}}{K_{6n}} \rightarrow 0
\]
The remaining results follow analogously.
\end{proof}

\begin{lemma}\label{upper box lemma}
$\dim_{B}\left(F\right)=\dim_{B}\left(G\right)=\frac{\log\left(2\right)}{\log\left(3\right)}.$
\end{lemma}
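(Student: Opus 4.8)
The plan is to reduce the computation of $\dim_{B}(F)=\limsup_{\delta\to 0}\frac{\log N(F,\delta)}{-\log\delta}$ to a limit superior over the discrete stage index $j$, and then to locate the stages at which the box-counting ratio is largest. Write $L(j)=f_{3}(j)\log 3+f_{7}(j)\log 7+\bigl(j-f_{3}(j)-f_{7}(j)\bigr)\log 5$ for minus the logarithm of the common length of the intervals of $F_{j}$. The two-sided bound \eqref{delta implies N(F,delta)} says precisely that $\log N(F,\delta)=j\log 2$ exactly when $L(j-1)<-\log\delta\le L(j)$. Hence on the length-scales of stage $j$ the quantity $\frac{\log N(F,\delta)}{-\log\delta}=\frac{j\log 2}{-\log\delta}$ is a decreasing function of $-\log\delta$, so its supremum over stage $j$ is approached as $-\log\delta\to L(j-1)^{+}$ and equals $\frac{j\log 2}{L(j-1)}$. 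Since every $\delta\in(0,1)$ lies in exactly one stage and $j_{\delta}\to\infty$ as $\delta\to 0$, I would conclude that
\[
\dim_{B}(F)=\limsup_{j\to\infty}\frac{j\log 2}{L(j-1)}.
\]

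For the upper bound I would use that $\log 3$ is the smallest of the three generator costs, so that $L(j-1)\ge (j-1)\log 3$ for every $j$; hence $\frac{j\log 2}{L(j-1)}\le \frac{j}{j-1}\cdot\frac{\log 2}{\log 3}$, and letting $j\to\infty$ gives $\dim_{B}(F)\le\frac{\log 2}{\log 3}$. For the matching lower bound I would exhibit a subsequence of stages on which the ratio returns to $\frac{\log 2}{\log 3}$. The natural choice is $j-1=K_{6n+1}$, the end of each long block of $\gen_{3}$ applications: by Proposition \ref{f Kj ratio limit} we have $\frac{f_{3}(K_{6n+1})}{K_{6n+1}}\to 1$ and $\frac{f_{7}(K_{6n+1})}{K_{6n+1}}\to 0$, so dividing $L(K_{6n+1})$ through by $K_{6n+1}$ gives $\frac{L(K_{6n+1})}{K_{6n+1}}\to\log 3$, and therefore
\[
\frac{(K_{6n+1}+1)\log 2}{L(K_{6n+1})}=\frac{K_{6n+1}+1}{K_{6n+1}}\cdot\frac{\log 2}{L(K_{6n+1})/K_{6n+1}}\longrightarrow\frac{\log 2}{\log 3}.
\]
Combining the two bounds yields $\dim_{B}(F)=\frac{\log 2}{\log 3}$.

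The statement for $G$ then follows by the identical argument applied to the symmetric construction. The only change is that the long block of $\gen_{3}$ applications now ends at stage $K_{6m+4}$, so one uses the limits $\frac{g_{3}(K_{6m+4})}{K_{6m+4}}\to 1$ and $\frac{g_{7}(K_{6m+4})}{K_{6m+4}}\to 0$ from Proposition \ref{f Kj ratio limit} in place of the statements for $f_{3}$ and $f_{7}$, evaluating the analogous ratio along the subsequence $j-1=K_{6m+4}$. The upper bound, which only uses that $\log 3$ is the cheapest generator, is unchanged.

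I expect the main obstacle to be the bookkeeping in the first paragraph: one must argue carefully that the limit superior over $\delta$ is controlled by the left endpoint $L(j-1)$ of each stage (the largest value of the ratio inside a stage, occurring at the jump up as one passes from stage $j-1$ into stage $j$) rather than by any interior value, and that passing from the continuous $\limsup$ in $\delta$ to the discrete $\limsup$ in $j$ is legitimate. Once this reduction is in place, the asymptotic inputs from Proposition \ref{f Kj ratio limit} make both the upper and the lower bound essentially routine, the only care needed being the harmless replacement of $j$ by $j-1$ in the ratios, which does not affect the limit.
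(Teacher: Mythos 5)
Your proposal is correct and follows essentially the same route as the paper: the upper bound via $L(j-1)\ge(j-1)\log 3$ is the paper's estimate $f_{3}(j_{\delta}-1)<j_{\delta}-1$ after dropping the $f_{7}$ term, and your lower bound along $j-1=K_{6n+1}$ is the paper's sequence $-\log\delta_{n}=L(K_{6n+1})$ up to the harmless shift from $K_{6n+1}+1$ to $K_{6n+1}$ in the numerator. The explicit reduction to a discrete $\limsup$ over stages is a tidy repackaging of what the paper does implicitly, and your treatment of $G$ via $g_{3}(K_{6m+4})$ matches the paper's symmetric argument.
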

\begin{proof}
First, we show that $\dim_{B}\left(F\right)\leq \frac{\log\left(2\right)}{\log\left(3\right)}$. Writing $j_{\delta}$ for the stage associated with the length-scale $\delta$ we have from \eqref{delta implies N(F,delta)}
\begin{align*}
\frac{\log\left(N\left(F,\delta\right)\right)}{-\log \delta}&< \frac{j_{\delta}\log\left(2\right)}{f_{3}\left(j_{\delta}-1\right)\left[\log\left(3\right)-\log\left(5\right)\right] + f_{7}\left(j_{\delta}-1\right)\left[\log\left(7\right)-\log\left(5\right)\right] + \left(j_{\delta}-1\right)\log\left(5\right)}\\
&\leq \frac{j_{\delta}\log\left(2\right)}{f_{3}\left(j_{\delta}-1\right)\left[\log\left(3\right)-\log\left(5\right)\right] + \left(j_{\delta}-1\right)\log\left(5\right)}
\intertext{and as $f_{3}\left(j_{\delta}-1\right)< j_{\delta}-1$}
&< \frac{j_{\delta}\log\left(2\right)}{\left(j_{\delta}-1\right)\left[\log\left(3\right)-\log\left(5\right)\right] + \left(j_{\delta}-1\right)\log\left(5\right)} = \frac{j_{\delta}\log\left(2\right)}{\left(j_{\delta}-1\right)\log\left(3\right)}\\
\end{align*}
which converges to $\frac{\log\left(2\right)}{\log\left(3\right)}$ as $\delta\rightarrow 0$. Similarly we can show $\dim_{B}\left(G\right)\leq\frac{\log\left(2\right)}{\log\left(3\right)}$

\paragraph*{}
Next, if we take the sequence $\set{\delta_{n}}$ with
\[
-\log\delta_{n} = f_{3}\left(K_{6n+1}\right)\left[\log\left(3\right)-\log\left(5\right)\right]+ f_{7}\left(K_{6n+1}\right)\left[\log\left(7\right)-\log\left(5\right)\right]+ K_{6n+1}\log\left(5\right)
\]
we have from \eqref{delta implies N(F,delta)} that $\log\left(N\left(F,\delta_{n}\right)\right)= K_{6n+1}\log\left(2\right)$. Consequently,
\begin{align*}
\frac{\log\left(N\left(F,\delta_{n}\right)\right)}{-\log \delta_{n}} &= \frac{K_{6n+1}\log\left(2\right)}{f_{3}\left(K_{6n+1}\right)\left[\log\left(3\right)-\log\left(5\right)\right]+ f_{7}\left(K_{6n+1}\right)\left[\log\left(7\right)-\log\left(5\right)\right]+ K_{6n+1}\log\left(5\right)}\\
&= \frac{\log\left(2\right)}{\frac{f_{3}\left(K_{6n+1}\right)}{K_{6n+1}}\left[\log\left(3\right)-\log\left(5\right)\right]+\frac{f_{7}\left(K_{6n+1}\right)}{K_{6n+1}}\left[\log\left(7\right)-\log\left(5\right)\right]+ \log\left(5\right)}\\
&\rightarrow \frac{\log\left(2\right)}{\log\left(3\right)-\log\left(5\right) + 0 + \log\left(5\right)} = \frac{\log\left(2\right)}{\log\left(3\right)}
\end{align*}
as $n\rightarrow\infty$ by the convergence results \eqref{f Kj ratio limit} so that $\dim_{B}\left(F\right)\geq \frac{\log\left(2\right)}{\log\left(3\right)}$\\
A similar sequence gives the corresponding inequality for $G$.
\end{proof}

\begin{lemma}\label{lower box lemma}
$\dim_{LB}\left(F\right)=\dim_{LB}\left(G\right)=\frac{\log\left(2\right)}{\log\left(7\right)}.$
\end{lemma}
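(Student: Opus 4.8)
The plan is to mirror the proof of Lemma \ref{upper box lemma}, with the inequalities reversed: I would first establish the bound $\dim_{LB}\left(F\right)\geq\frac{\log\left(2\right)}{\log\left(7\right)}$ uniformly over all length-scales, and then exhibit a single sequence along which the box-counting ratio tends to $\frac{\log\left(2\right)}{\log\left(7\right)}$ to obtain the reverse inequality $\dim_{LB}\left(F\right)\leq\frac{\log\left(2\right)}{\log\left(7\right)}$.

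For the lower bound, writing $j_{\delta}$ for the stage associated with $\delta$, I would start from the upper estimate for $-\log\delta$ in \eqref{delta implies N(F,delta)}, namely $-\log\delta\leq f_{3}\left(j_{\delta}\right)\left[\log\left(3\right)-\log\left(5\right)\right]+f_{7}\left(j_{\delta}\right)\left[\log\left(7\right)-\log\left(5\right)\right]+j_{\delta}\log\left(5\right)$, together with $\log\left(N\left(F,\delta\right)\right)=j_{\delta}\log\left(2\right)$. The crucial observation is the signs of the coefficients: since $\log\left(3\right)-\log\left(5\right)<0$ and $f_{3}\left(j_{\delta}\right)\geq 0$ the $f_{3}$ term is nonpositive and may be discarded, while since $\log\left(7\right)-\log\left(5\right)>0$ and $f_{7}\left(j_{\delta}\right)\leq j_{\delta}$ the $f_{7}$ term is at most $j_{\delta}\left[\log\left(7\right)-\log\left(5\right)\right]$. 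Hence the denominator is bounded above by $j_{\delta}\left[\log\left(7\right)-\log\left(5\right)\right]+j_{\delta}\log\left(5\right)=j_{\delta}\log\left(7\right)$, giving $\frac{\log\left(N\left(F,\delta\right)\right)}{-\log\delta}\geq\frac{j_{\delta}\log\left(2\right)}{j_{\delta}\log\left(7\right)}=\frac{\log\left(2\right)}{\log\left(7\right)}$ for every $\delta$. Taking the liminf as $\delta\searrow 0$ in \eqref{dimLB def} then yields $\dim_{LB}\left(F\right)\geq\frac{\log\left(2\right)}{\log\left(7\right)}$.

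For the reverse inequality I would take the sequence $\set{\delta_{n}}$ defined by
\[
-\log\delta_{n} = f_{3}\left(K_{6n+2}\right)\left[\log\left(3\right)-\log\left(5\right)\right]+ f_{7}\left(K_{6n+2}\right)\left[\log\left(7\right)-\log\left(5\right)\right]+ K_{6n+2}\log\left(5\right),
\]
which is a length-scale corresponding to stage $K_{6n+2}$, the stage at which $\gen_{7}$ has been applied densely in the construction of $F$. Then $\log\left(N\left(F,\delta_{n}\right)\right)=K_{6n+2}\log\left(2\right)$ by \eqref{delta implies N(F,delta)}, and dividing numerator and denominator by $K_{6n+2}$ gives
\[
\frac{\log\left(N\left(F,\delta_{n}\right)\right)}{-\log \delta_{n}} = \frac{\log\left(2\right)}{\frac{f_{3}\left(K_{6n+2}\right)}{K_{6n+2}}\left[\log\left(3\right)-\log\left(5\right)\right]+\frac{f_{7}\left(K_{6n+2}\right)}{K_{6n+2}}\left[\log\left(7\right)-\log\left(5\right)\right]+ \log\left(5\right)}.
\]
By Proposition \ref{f Kj ratio limit} we have $\frac{f_{3}\left(K_{6n+2}\right)}{K_{6n+2}}\rightarrow 0$ and $\frac{f_{7}\left(K_{6n+2}\right)}{K_{6n+2}}\rightarrow 1$, so this converges to $\frac{\log\left(2\right)}{\left[\log\left(7\right)-\log\left(5\right)\right]+\log\left(5\right)}=\frac{\log\left(2\right)}{\log\left(7\right)}$ as $n\rightarrow\infty$. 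As $\set{\delta_{n}}$ is a particular sequence tending to $0$, the liminf is at most $\frac{\log\left(2\right)}{\log\left(7\right)}$, establishing $\dim_{LB}\left(F\right)=\frac{\log\left(2\right)}{\log\left(7\right)}$. The identical argument with the $g$-functions and the stage $K_{6m+5}$, at which $\gen_{7}$ dominates the construction of $G$, together with the corresponding parts of Proposition \ref{f Kj ratio limit}, gives $\dim_{LB}\left(G\right)=\frac{\log\left(2\right)}{\log\left(7\right)}$.

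The only delicate point, and the main (if minor) obstacle, is keeping the directions of the inequalities straight. Because $\gen_{3}$ shrinks intervals less than $\gen_{5}$ whereas $\gen_{7}$ shrinks them more, the $f_{3}$ and $f_{7}$ terms carry opposite signs, so one must discard the correct nonpositive term and bound the correct term by $j_{\delta}$ in order to obtain a valid \emph{upper} bound on the denominator; everything else is a routine transcription of Lemma \ref{upper box lemma}.
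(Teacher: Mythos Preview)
Your proposal is correct and follows essentially the same route as the paper: first bound the box-counting ratio below by $\frac{\log 2}{\log 7}$ for every $\delta$ by discarding the nonpositive $f_{3}$ term and bounding $f_{7}\leq j_{\delta}$, then exhibit the sequence at stage $K_{6n+2}$ and apply Proposition~\ref{f Kj ratio limit} to realise the value $\frac{\log 2}{\log 7}$. The paper does exactly this, so there is nothing to add.
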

\begin{proof}
For all $\delta>0$ the implication \eqref{delta implies N(F,delta)} gives
\begin{align*}
\frac{\log\left(N\left(F,\delta\right)\right)}{-\log \delta} &\geq \frac{j_{\delta}\log\left(2\right)}{ f_{3}\left(j_{\delta}\right)\left[\log\left(3\right)-\log\left(5\right)\right]+ f_{7}\left(j_{\delta}\right)\left[\log\left(7\right)-\log\left(5\right)\right]+j_{\delta}\log\left(5\right)}\\
&\geq \frac{j_{\delta}\log\left(2\right)}{ f_{7}\left(j_{\delta}\right)\left[\log\left(7\right)-\log\left(5\right)\right]+j_{\delta}\log\left(5\right)}\\
&> \frac{j_{\delta}\log\left(2\right)}{ j_{\delta}\left[\log\left(7\right)-\log\left(5\right)\right]+j_{\delta}
\log\left(5\right)}=\frac{\log\left(2\right)}{\log\left(7\right)}
\end{align*}
Next, if we take the sequence $\set{\delta_{n}}$ with 
\[
-\log\delta_{n} = f_{3}\left(K_{6n+2}\right)\left[\log\left(3\right)-\log\left(5\right)\right]+ f_{7}\left(K_{6n+2}\right)\left[\log\left(7\right)-\log\left(5\right)\right]+ K_{6n+2}\log\left(5\right)
\]
we have
\begin{align*}
\frac{\log\left(N\left(F,\delta_{n}\right)\right)}{-\log \delta_{n}} &= \frac{K_{6n+2}\log\left(2\right)}{ f_{3}\left(K_{6n+2}\right)\left[\log\left(3\right)-\log\left(5\right)\right]+ f_{7}\left(K_{6n+2}\right)\left[\log\left(7\right)-\log\left(5\right)\right]+ K_{6n+2}\log\left(5\right)}\\
&= \frac{\log\left(2\right)}{\frac{f_{3}\left(K_{6n+2}\right)}{K_{6n+2}}\left[\log\left(3\right)-\log\left(5\right)\right]+\frac{f_{7}\left(K_{6n+2}\right)}{K_{6n+2}}\left[\log\left(7\right)-\log\left(5\right)\right]+ \log\left(5\right)}\\
&\rightarrow \frac{\log\left(2\right)}{0 + \log\left(7\right)-\log\left(5\right) + \log\left(5\right)} = \frac{\log\left(2\right)}{\log\left(7\right)}
\end{align*}
as $n\rightarrow\infty$ by the convergence results \eqref{f Kj ratio limit}. Hence $\dim_{LB}\left(F\right)=\frac{\log\left(2\right)}{\log\left(7\right)}$, and similarly $\dim_{LB}\left(G\right)=\frac{\log\left(2\right)}{\log\left(7\right)}$.
\end{proof}
Consequently, both $F$ and $G$ have unequal upper and lower box-counting dimensions:
\begin{corollary}
$\dim_{LB}\left(F\right)=\dim_{LB}\left(G\right) < \dim_{B}\left(F\right)=\dim_{B}\left(G\right)$.
\end{corollary}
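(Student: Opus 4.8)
The plan is to combine the two preceding lemmas directly, since the corollary merely records the strict inequality between the two common values that they establish. First I would invoke Lemma \ref{upper box lemma} to substitute $\dim_{B}(F)=\dim_{B}(G)=\frac{\log(2)}{\log(3)}$, and Lemma \ref{lower box lemma} to substitute $\dim_{LB}(F)=\dim_{LB}(G)=\frac{\log(2)}{\log(7)}$. This reduces the claim to the single numerical inequality
\[
\frac{\log\left(2\right)}{\log\left(7\right)} < \frac{\log\left(2\right)}{\log\left(3\right)}.
\]

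The only remaining step is to verify this inequality. Since $7>3>1$ we have $\log(7)>\log(3)>0$, and because $\log(2)>0$ the map $x\mapsto \frac{\log(2)}{x}$ is strictly decreasing on $(0,\infty)$; evaluating at $x=\log(7)$ and $x=\log(3)$ therefore yields $\frac{\log(2)}{\log(7)}<\frac{\log(2)}{\log(3)}$, which is exactly the asserted strict inequality $\dim_{LB}(F)=\dim_{LB}(G)<\dim_{B}(F)=\dim_{B}(G)$.

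No genuine obstacle arises here: all of the analytic work — the explicit computation of the four dimension values by way of the generator-counting asymptotics of Proposition \ref{f Kj ratio limit}, together with the box-counting identities \eqref{delta implies N(F,delta)} and \eqref{delta implies N(G,delta)} — has already been carried out in Lemmas \ref{upper box lemma} and \ref{lower box lemma}. The corollary is thus a one-line bookkeeping consequence, and the same argument would apply verbatim for any pair of distinct contraction ratios (here governed by the generators $\gen_{3}$ and $\gen_{7}$) used to build the two Cantor-like sets, the strictness being guaranteed simply by the monotonicity of $x\mapsto\frac{\log(2)}{x}$.
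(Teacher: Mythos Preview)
Your proposal is correct and matches the paper's own treatment: the corollary is stated immediately after Lemmas~\ref{upper box lemma} and~\ref{lower box lemma} as a direct consequence, with no separate proof given, so your combination of those two lemmas with the elementary inequality $\frac{\log 2}{\log 7}<\frac{\log 2}{\log 3}$ is exactly the intended argument.
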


\paragraph*{}
Whilst the above lemmas demonstrate that for $F$ and $G$ there are sequences of length-scales $\set{\delta_{n}}$ with $\lim_{n\rightarrow\infty}\frac{\log\left(N\left(F,\delta_{n}\right)\right)}{-\log\delta_{n}}$ equal to $\frac{\log\left(2\right)}{\log\left(3\right)}$ or equal to $\frac{\log\left(2\right)}{\log\left(7\right)}$ we now show that for a large class of sequences (in fact the very sequences that corollaries \ref{subsequence corollary upper} and \ref{subsequence corollary lower} produce) this limit, if it exists, is bounded by $\frac{\log\left(2\right)}{\log\left(5\right)}$.

\begin{lemma}\label{deltan N(F,deltan) converge to log2log5}
Suppose $\set{\delta_{n}}$ is a sequence such that each length-scale $\delta_{n}$ corresponds to the construction of $F$ at some stage $j_{n}\in \left(K_{6n+2},K_{6n+6}\right]$, then if the limit exists
\[\lim_{n\rightarrow\infty}\frac{\log\left(N\left(F,\delta_{n}\right)\right)}{-\log\delta_{n}}\leq\frac{\log\left(2\right)}{\log\left(5\right)}.
\]
\end{lemma}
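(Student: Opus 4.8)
The plan is to read off the exact box-counting data from \eqref{delta implies N(F,delta)}, to exploit the fact that once the stage index passes $K_{6n+2}$ the construction of $F$ applies only $\gen_5$ up to stage $K_{6n+6}$, and then to show that the $\gen_5$ factor $5^{-j_n}$ dominates $-\log\delta_n$ so that the exponent ratio cannot exceed $\frac{\log(2)}{\log(5)}$ in the limit. First I would record that, since each $\delta_n$ corresponds to stage $j_n$ in the construction of $F$, the implication \eqref{delta implies N(F,delta)} gives $\log\left(N\left(F,\delta_n\right)\right)=j_n\log\left(2\right)$ together with the lower bound
\[
-\log\delta_n > f_{3}\left(j_n-1\right)\left[\log\left(3\right)-\log\left(5\right)\right] + f_{7}\left(j_n-1\right)\left[\log\left(7\right)-\log\left(5\right)\right] + \left(j_n-1\right)\log\left(5\right).
\]
Because $j_n>K_{6n+2}\rightarrow\infty$ we have $j_n\rightarrow\infty$ and $\frac{j_n-1}{j_n}\rightarrow 1$, which I will use freely at the end.

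The central observation is that for $j_n\in\left(K_{6n+2},K_{6n+6}\right]$ no $\gen_3$ is applied between stage $K_{6n+2}$ and stage $j_n$, so by \eqref{f constant on intervals} the counting function $f_3$ is frozen: $f_{3}\left(j_n-1\right)=f_{3}\left(K_{6n+1}\right)=f_{3}\left(K_{6n+2}\right)$. Since $\log\left(7\right)-\log\left(5\right)>0$, the term $f_{7}\left(j_n-1\right)\left[\log\left(7\right)-\log\left(5\right)\right]$ is non-negative and may be discarded from the lower bound on $-\log\delta_n$ without invalidating it; this is the step that makes the estimate go through, as we do not need to evaluate $f_7$ at all. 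Dividing numerator and denominator by $j_n$ then yields, for all large $n$,
\[
\frac{\log\left(N\left(F,\delta_n\right)\right)}{-\log\delta_n} < \frac{\log\left(2\right)}{\frac{j_n-1}{j_n}\log\left(5\right) - \frac{f_{3}\left(j_n-1\right)}{j_n}\left[\log\left(5\right)-\log\left(3\right)\right]}.
\]

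To finish I would control the remaining correction term: using the freezing identity and $j_n>K_{6n+2}$,
\[
\frac{f_{3}\left(j_n-1\right)}{j_n} = \frac{f_{3}\left(K_{6n+2}\right)}{j_n} \leq \frac{f_{3}\left(K_{6n+2}\right)}{K_{6n+2}} \rightarrow 0
\]
by the $l=2$ case of Proposition \ref{f Kj ratio limit}. Hence the denominator above tends to $\log\left(5\right)$, the right-hand side tends to $\frac{\log\left(2\right)}{\log\left(5\right)}$, and taking the limit (assumed to exist) gives $\lim_{n\rightarrow\infty}\frac{\log\left(N\left(F,\delta_n\right)\right)}{-\log\delta_n}\leq\frac{\log\left(2\right)}{\log\left(5\right)}$. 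The main obstacle is precisely the negative $\gen_3$ correction $f_{3}\left(j_n-1\right)\left[\log\left(3\right)-\log\left(5\right)\right]$, which a priori could shrink $-\log\delta_n$ well below $j_n\log\left(5\right)$ and push the ratio above $\frac{\log\left(2\right)}{\log\left(5\right)}$; the point is that it is asymptotically negligible because $f_3$ is frozen at $f_{3}\left(K_{6n+1}\right)$ while $j_n$ already exceeds the far larger $K_{6n+2}$.
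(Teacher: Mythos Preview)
Your proof is correct and follows essentially the same route as the paper's: start from \eqref{delta implies N(F,delta)}, drop the non-negative $f_7$ term, freeze $f_3(j_n-1)=f_3(K_{6n+1})$ via \eqref{f constant on intervals}, and use $j_n>K_{6n+2}$ together with Proposition~\ref{f Kj ratio limit} to kill the $\gen_3$ correction. The only cosmetic difference is that the paper writes $f_3(K_{6n+1})$ throughout and bounds $\tfrac{f_3(K_{6n+1})}{j_n}\leq\tfrac{f_3(K_{6n+1})}{K_{6n+2}}$ before passing to the limit, whereas you use the equivalent $f_3(K_{6n+2})$ and a squeeze argument; both yield the same conclusion.
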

Essentially, these stages are sufficiently far from the range $\left(K_{6n},K_{6n+1}\right]$ where $\gen_{3}$ is applied so that the set $F$ does not look like the Cantor middle-third set at these stages. The proof relies on the fact that by stage $j_{n}$ the generator $\gen_{3}$ has not been applied for at least the last $K_{6n+2}-K_{6n+1}$ stages.
\begin{proof}
For each $n\in\mathbb{N}$ from \eqref{delta implies N(F,delta)}
\begin{align*}
\frac{\log\left(N\left(F,\delta_{n}\right)\right)}{-\log\delta_{n}} &\leq \frac{j_{n}\log\left(2\right)}{f_{3}\left(j_{n}-1\right)\left[\log\left(3\right)-\log\left(5\right)\right]+f_{7}\left(j_{n}-1\right)\left[\log\left(7\right)-\log\left(5\right)\right] + \left(j_{n}-1\right)\log\left(5\right)}\\
&\leq \frac{j_{n}\log\left(2\right)}{f_{3}\left(j_{n}-1\right)\left[\log\left(3\right)-\log\left(5\right)\right]+ \left(j_{n}-1\right)\log\left(5\right)}
\intertext{From \eqref{f constant on intervals} we have $f_{3}\left(j_{n}-1\right)=f_{3}\left(K_{6n+1}\right)$ so that}
&=\frac{j_{n}\log\left(2\right)}{f_{3}\left(K_{6n+1}\right)\left[\log\left(3\right)-\log\left(5\right)\right]+ \left(j_{n}-1\right)\log\left(5\right)}\\
&=\frac{\log\left(2\right)}{\frac{f\left(K_{6n+1}\right)}{j_{n}}\left[\log\left(3\right)-\log\left(5\right)\right]+\log\left(5\right)-\frac{1}{j_{n}}\log\left(5\right)}.
\intertext{Next, as $j_{n}>K_{6n+2}$}
&\leq \frac{\log\left(2\right)}{\frac{f_{3}\left(K_{6n+1}\right)}{K_{6n+2}}\left[\log\left(3\right)-\log\left(5\right)\right]+\log\left(5\right)-\frac{1}{j_{n}}\log\left(5\right)}\\
&\rightarrow \frac{\log\left(2\right)}{\log\left(5\right)}
\end{align*}
as $n\rightarrow\infty$ by the convergence result \eqref{f Kj ratio limit}.
\end{proof}
The corresponding result for $G$, proved in a similar way, is as follows.
\begin{lemma}\label{deltan N(G,deltan) converge to log2log5}
Suppose $\set{\delta_{m}}$ is a sequence such that each length-scale $\delta_{m}$ corresponds to the construction of $G$ at some stage $j_{m}\in \left(K_{6m+5},K_{6m+9}\right]$, then if the limit exists
\[\lim_{m\rightarrow\infty}\frac{\log\left(N\left(G,\delta_{m}\right)\right)}{-\log\delta_{m}}\leq\frac{\log\left(2\right)}{\log\left(5\right)}.
\]
\end{lemma}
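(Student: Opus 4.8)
The plan is to mirror the proof of Lemma~\ref{deltan N(F,deltan) converge to log2log5} exactly, shifting every index by three (so that the role played by the $\gen_{3}$-phase $\left(K_{6n},K_{6n+1}\right]$ of $F$ is taken by the $\gen_{3}$-phase $\left(K_{6m+3},K_{6m+4}\right]$ of $G$, and the stage range $\left(K_{6n+2},K_{6n+6}\right]$ becomes $\left(K_{6m+5},K_{6m+9}\right]$). First I would invoke the logarithmic form \eqref{delta implies N(G,delta)} of Proposition~\ref{N=M=2j prop G} to bound, for each $m$,
\[
\frac{\log\left(N\left(G,\delta_{m}\right)\right)}{-\log\delta_{m}} \leq \frac{j_{m}\log\left(2\right)}{g_{3}\left(j_{m}-1\right)\left[\log\left(3\right)-\log\left(5\right)\right]+g_{7}\left(j_{m}-1\right)\left[\log\left(7\right)-\log\left(5\right)\right]+\left(j_{m}-1\right)\log\left(5\right)}.
\]
Since $\log\left(7\right)-\log\left(5\right)>0$ and $g_{7}\geq 0$, I would then discard the $g_{7}$ term, which only enlarges the right-hand side and so preserves the upper bound.

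The structural heart of the argument is the observation that $\gen_{3}$ is not applied to $G$ at any stage between $K_{6m+4}$ and the start of the next cycle, so $g_{3}$ is constant on $\left(K_{6m+4},K_{6m+9}\right]$. Because $j_{m}\in\left(K_{6m+5},K_{6m+9}\right]$ forces $j_{m}-1\in\left(K_{6m+4},K_{6m+9}\right]$, the constancy statement \eqref{f constant on intervals} gives $g_{3}\left(j_{m}-1\right)=g_{3}\left(K_{6m+4}\right)=g_{3}\left(K_{6m+5}\right)$. Substituting this and dividing numerator and denominator by $j_{m}$ leaves
\[
\frac{\log\left(N\left(G,\delta_{m}\right)\right)}{-\log\delta_{m}} \leq \frac{\log\left(2\right)}{\frac{g_{3}\left(K_{6m+5}\right)}{j_{m}}\left[\log\left(3\right)-\log\left(5\right)\right]+\log\left(5\right)-\frac{1}{j_{m}}\log\left(5\right)}.
\]

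The step requiring the most care is the replacement of $j_{m}$ by $K_{6m+5}$ in the surviving ratio: because the coefficient $\log\left(3\right)-\log\left(5\right)$ is \emph{negative}, and $j_{m}>K_{6m+5}$ gives $\frac{g_{3}\left(K_{6m+5}\right)}{j_{m}}<\frac{g_{3}\left(K_{6m+5}\right)}{K_{6m+5}}$, enlarging this ratio makes the denominator smaller, which is precisely the direction needed to keep a valid upper bound. Finally I would apply Proposition~\ref{f Kj ratio limit} in the case $l=5$ (for which $\frac{g_{3}\left(K_{6m+5}\right)}{K_{6m+5}}\rightarrow 0$, since $5\neq 4$) together with $j_{m}\rightarrow\infty$; the denominator then tends to $\log\left(5\right)$, yielding $\lim_{m\rightarrow\infty}\frac{\log\left(N\left(G,\delta_{m}\right)\right)}{-\log\delta_{m}}\leq\frac{\log\left(2\right)}{\log\left(5\right)}$ whenever the limit exists. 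The only genuine subtlety, beyond the sign bookkeeping just noted, is checking that the constancy interval for $g_{3}$ really does contain $j_{m}-1$ across the whole range $\left(K_{6m+5},K_{6m+9}\right]$; this is immediate once the $\gen_{3}$-schedule for $G$ is compared with the relevant interval endpoints.
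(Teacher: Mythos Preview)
Your proposal is correct and is precisely the translation of the proof of Lemma~\ref{deltan N(F,deltan) converge to log2log5} that the paper intends when it says the result for $G$ is ``proved in a similar way''; every step (the $g_{7}$ drop, the constancy of $g_{3}$ past stage $K_{6m+4}$, the replacement $j_{m}\mapsto K_{6m+5}$ using the sign of $\log 3-\log 5$, and the appeal to Proposition~\ref{f Kj ratio limit} with $l=5$) matches the paper's scheme with the indices shifted by three. Your explicit identification of the correct constancy interval $\left(K_{6m+4},K_{6m+9}\right]$ for $g_{3}$ from the $\gen_{3}$-schedule is in fact cleaner than the statement recorded in \eqref{f constant on intervals}, which as written contains a typo for $g_{3}$.
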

The following results for lower bounds are also proved similarly.
\begin{lemma}\label{deltan M(F,deltan) converge to log2log5}
Suppose $\set{\delta_{n}}$ is a sequence such that each length-scale $\delta_{n}$ corresponds to the construction of $F$ at some stage $j_{n}\in \left(K_{6n+3},K_{6n+7}\right]$, then if the limit exists
\[\lim_{n\rightarrow\infty}\frac{\log\left(M\left(F,\delta_{n}\right)\right)}{-\log\delta_{n}}\geq\frac{\log\left(2\right)}{\log\left(5\right)}
\]
\end{lemma}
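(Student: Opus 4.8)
The plan is to mirror the proof of Lemma~\ref{deltan N(F,deltan) converge to log2log5}, but since we now seek a \emph{lower} bound for $\frac{\log(M(F,\delta_n))}{-\log\delta_n}$ rather than an upper bound for the analogous $N$-quotient, I would work from the \emph{upper} end of the length-scale range for stage $j_n$ and discard the non-positive $\gen_{3}$ contribution. The essential observation---parallel to the remark preceding Lemma~\ref{deltan N(F,deltan) converge to log2log5}, but with the roles of $\gen_3$ and $\gen_7$ exchanged---is that the stages in $\left(K_{6n+3},K_{6n+7}\right]$ lie far beyond the block $\left(K_{6n+1},K_{6n+2}\right]$ on which $\gen_{7}$ was last applied, so $\gen_{7}$ has been dormant for at least $K_{6n+3}-K_{6n+2}$ stages and the set $F$ does not look like the Cantor middle-$\frac{5}{7}^{\text{th}}$ set at these length-scales.

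First I would invoke \eqref{delta implies N(F,delta)}, which for $\delta_n$ corresponding to stage $j_n$ gives $\log(M(F,\delta_n))=j_n\log(2)$ together with the upper bound $-\log\delta_n\leq f_{3}(j_n)[\log(3)-\log(5)]+f_{7}(j_n)[\log(7)-\log(5)]+j_n\log(5)$. Since $\log(3)-\log(5)<0$ and $f_{3}(j_n)\geq 0$, the term $f_{3}(j_n)[\log(3)-\log(5)]$ is non-positive, so dropping it can only enlarge the right-hand side; feeding the enlarged bound into the denominator therefore yields $\frac{\log(M(F,\delta_n))}{-\log\delta_n}\geq \frac{j_n\log(2)}{f_{7}(j_n)[\log(7)-\log(5)]+j_n\log(5)}$.

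Next I would control $f_{7}(j_n)$. Because $\left(K_{6n+3},K_{6n+7}\right]\subset\left(K_{6n+2},K_{6n+7}\right]$, the constancy relation \eqref{f constant on intervals} gives $f_{7}(j_n)=f_{7}(K_{6n+2})=f_{7}(K_{6n+3})$. Dividing numerator and denominator by $j_n$ and using $j_n>K_{6n+3}$ to bound $\frac{f_{7}(j_n)}{j_n}\leq\frac{f_{7}(K_{6n+3})}{K_{6n+3}}$, I would appeal to Proposition~\ref{f Kj ratio limit} (with $l=3$) to conclude $\frac{f_{7}(K_{6n+3})}{K_{6n+3}}\to 0$. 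Hence the lower bound tends to $\frac{\log(2)}{0\cdot[\log(7)-\log(5)]+\log(5)}=\frac{\log(2)}{\log(5)}$, and a limit comparison shows that if $\lim_{n\to\infty}\frac{\log(M(F,\delta_n))}{-\log\delta_n}$ exists it must be at least this value.

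The only genuine subtlety---beyond keeping the inequalities consistently oriented, in that an upper bound on $-\log\delta_n$ yields a lower bound on the quotient and discarding the non-positive $\gen_{3}$ term preserves this direction---is the estimate of $\frac{f_{7}(j_n)}{j_n}$. One cannot simply write $f_{7}(j_n)\leq j_n$, since that would produce the weaker value $\frac{\log(2)}{\log(7)}$; instead one must exploit that $f_{7}$ has stalled at $f_{7}(K_{6n+2})$ while $j_n$ has already grown past $K_{6n+3}$. Because $K_{j}=10^{2^{j}}$ grows so rapidly that $\frac{f_{7}(K_{6n+2})}{K_{6n+3}}=\frac{f_{7}(K_{6n+3})}{K_{6n+3}}\to 0$ by Proposition~\ref{f Kj ratio limit}, this ratio vanishes even though $\frac{f_{7}(K_{6n+2})}{K_{6n+2}}\to 1$; it is therefore crucial to place $K_{6n+3}$, not $K_{6n+2}$, in the denominator. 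Finally, I note that $\gen_{3}$ switches back on at stage $K_{6n+6}$ inside $\left(K_{6n+3},K_{6n+7}\right]$, so $f_{3}$ is not constant there; this causes no trouble precisely because the $f_{3}$ term was already dropped, and only the constancy of $f_{7}$ throughout the interval is required.
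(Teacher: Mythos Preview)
Your proposal is correct and follows exactly the route the paper intends: the paper merely states that this lemma is ``proved similarly'' to Lemma~\ref{deltan N(F,deltan) converge to log2log5}, and your argument is the natural mirror of that proof with the roles of $\gen_{3}$ and $\gen_{7}$ interchanged and the inequality directions reversed. Your handling of the subtleties---in particular using the upper bound for $-\log\delta_n$ from \eqref{delta implies N(F,delta)}, dropping the non-positive $f_3$ contribution, invoking the constancy of $f_7$ on $\left(K_{6n+2},K_{6n+7}\right]$, and placing $K_{6n+3}$ rather than $K_{6n+2}$ in the denominator so that Proposition~\ref{f Kj ratio limit} yields a vanishing ratio---is precisely what the analogous argument requires.
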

and
\begin{lemma}\label{deltan M(G,deltan) converge to log2log5}
Suppose $\set{\delta_{m}}$ is a sequence such that each length-scale $\delta_{m}$ corresponds to the construction of $G$ at some stage $j_{m}\in \left(K_{6m},K_{6m+4}\right]$, then if the limit exists
\[\lim_{m\rightarrow\infty}\frac{\log\left(M\left(G,\delta_{m}\right)\right)}{-\log\delta_{m}}\geq\frac{\log\left(2\right)}{\log\left(5\right)}.
\]
\end{lemma}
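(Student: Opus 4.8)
The plan is to mirror the proof of Lemma \ref{deltan M(F,deltan) converge to log2log5}, interchanging the roles of $F$ and $G$, and to exploit the fact that over the stages $j_{m}\in\left(K_{6m},K_{6m+4}\right]$ the generator $\gen_{7}$ is never applied in the construction of $G$, so that $g_{7}$ is frozen at a value negligible compared with $j_{m}$. Throughout I would write $j_{m}$ for the stage in the construction of $G$ associated with the length-scale $\delta_{m}$.

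First I would invoke the implication \eqref{delta implies N(G,delta)}: since $\delta_{m}$ corresponds to stage $j_{m}$ we have $\log\left(M\left(G,\delta_{m}\right)\right)=j_{m}\log\left(2\right)$, while $-\log\delta_{m}$ is bounded above by the stage-$j_{m}$ expression. Dividing and using this upper bound on the denominator gives
\[
\frac{\log\left(M\left(G,\delta_{m}\right)\right)}{-\log\delta_{m}} \geq \frac{j_{m}\log\left(2\right)}{g_{3}\left(j_{m}\right)\left[\log\left(3\right)-\log\left(5\right)\right] + g_{7}\left(j_{m}\right)\left[\log\left(7\right)-\log\left(5\right)\right] + j_{m}\log\left(5\right)}.
\]
Because $\log\left(3\right)-\log\left(5\right)<0$ and $g_{3}\left(j_{m}\right)\geq 0$, the term involving $g_{3}$ is non-positive, so deleting it only enlarges the denominator and preserves the inequality, yielding
\[
\frac{\log\left(M\left(G,\delta_{m}\right)\right)}{-\log\delta_{m}} \geq \frac{\log\left(2\right)}{\frac{g_{7}\left(j_{m}\right)}{j_{m}}\left[\log\left(7\right)-\log\left(5\right)\right] + \log\left(5\right)}.
\]

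The crux is then to show $\frac{g_{7}\left(j_{m}\right)}{j_{m}}\to 0$. For $j_{m}\in\left(K_{6m},K_{6m+4}\right]$ the most recent application of $\gen_{7}$ in $G$ ended at stage $K_{6m-1}=K_{6\left(m-1\right)+5}$, and the next does not begin until after stage $K_{6m+4}$, so $g_{7}$ is constant on $\left(K_{6m-1},K_{6m+4}\right]$; in particular $g_{7}\left(j_{m}\right)=g_{7}\left(K_{6m}\right)$. Since $j_{m}>K_{6m}$ this gives $\frac{g_{7}\left(j_{m}\right)}{j_{m}}\leq \frac{g_{7}\left(K_{6m}\right)}{K_{6m}}$, and the $l=0$ case of Proposition \ref{f Kj ratio limit} shows the right-hand side tends to $0$. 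Substituting into the displayed lower bound, the right-hand side tends to $\frac{\log\left(2\right)}{\log\left(5\right)}$, so $\liminf_{m\to\infty}\frac{\log\left(M\left(G,\delta_{m}\right)\right)}{-\log\delta_{m}}\geq \frac{\log\left(2\right)}{\log\left(5\right)}$, and the stated bound on the limit (when it exists) follows.

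I expect the main obstacle to be book-keeping rather than anything deep: one must correctly locate the stages $\left(K_{6m},K_{6m+4}\right]$ within the generator schedule of $G$ to confirm that $\gen_{7}$ is dormant there, so that $g_{7}$ is genuinely frozen, and then read off the frozen value as $g_{7}\left(K_{6m}\right)$ so that the correct case of Proposition \ref{f Kj ratio limit} applies. Some care is needed because the constancy interval of $g_{7}$ straddles the index $6m$, so the frozen value is most naturally evaluated at $K_{6m}$ rather than at the endpoint $K_{6m-1}$ where $\gen_{7}$ last acted; once this is pinned down the remaining limit computation is routine.
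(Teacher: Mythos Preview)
Your argument is correct and follows exactly the template the paper indicates (``proved similarly''): you use the stage-$j_{m}$ upper bound on $-\log\delta_{m}$ from \eqref{delta implies N(G,delta)}, drop the non-positive $g_{3}$ term, and then exploit the constancy of $g_{7}$ on $\left(K_{6m-1},K_{6m+4}\right]$ together with the $l=0$ case of Proposition~\ref{f Kj ratio limit} to send $g_{7}\left(j_{m}\right)/j_{m}\to 0$. This is precisely the mirror of the proof of Lemma~\ref{deltan N(F,deltan) converge to log2log5} with the inequalities reversed, as the paper intends.
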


Finally, we find a bound on the box-counting dimensions of the product $F\times G$.

\begin{theorem}\label{strict upper box product inequality}
$\dim_{B}\left(F\times G\right)\leq \frac{\log\left(2\right)}{\log\left(3\right)} + \frac{\log\left(2\right)}{\log\left(5\right)}.$
\end{theorem}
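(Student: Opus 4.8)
The plan is to combine the elementary product inequality \eqref{minimal cover inequality} with the out-of-phase behaviour of $F$ and $G$ captured by Corollary \ref{subsequence corollary upper}, which is precisely what improves on the trivial bound $\dim_{B}\left(F\right)+\dim_{B}\left(G\right)=\frac{2\log\left(2\right)}{\log\left(3\right)}$. Starting from $N\left(F\times G,\sqrt{2}\delta\right)\leq N\left(F,\delta\right)N\left(G,\delta\right)$ and dividing by $-\log\left(\sqrt{2}\delta\right)$, I obtain for all sufficiently small $\delta$
\[
\frac{\log\left(N\left(F\times G,\sqrt{2}\delta\right)\right)}{-\log\left(\sqrt{2}\delta\right)}\leq \frac{\log\left(N\left(F,\delta\right)\right)}{-\log\left(\sqrt{2}\delta\right)}+\frac{\log\left(N\left(G,\delta\right)\right)}{-\log\left(\sqrt{2}\delta\right)}.
\]
Because $-\log\left(\sqrt{2}\delta\right)=-\log\delta-\log\sqrt{2}$ and the ratio $\frac{-\log\delta}{-\log\left(\sqrt{2}\delta\right)}\to 1$ as $\delta\to 0$, the constant $\sqrt{2}$ plays no role in any limit, so it suffices to bound the quantity $\limsup_{\delta\to 0}\left[\frac{\log\left(N\left(F,\delta\right)\right)}{-\log\delta}+\frac{\log\left(N\left(G,\delta\right)\right)}{-\log\delta}\right]$, which dominates $\dim_{B}\left(F\times G\right)$.

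I would then select a sequence $\set{\delta_{i}}$ with $\delta_{i}\to 0$ along which this sum converges to its limsup value $L$, so that $\dim_{B}\left(F\times G\right)\leq L$, and apply Corollary \ref{subsequence corollary upper} to $\set{\delta_{i}}$. In the first alternative there is a subsequence $\set{\delta_{i_{n}}}$ with each $\delta_{i_{n}}$ corresponding to a stage $j_{\delta_{i_{n}}}\in\left(K_{6n+2},K_{6n+6}\right]$ in the construction of $F$; passing to a further subsequence on which both box-counting functions converge (possible since both are bounded above by $\frac{\log\left(2\right)}{\log\left(3\right)}$, as in Lemma \ref{upper box lemma}), Lemma \ref{deltan N(F,deltan) converge to log2log5} caps the $F$-term at $\frac{\log\left(2\right)}{\log\left(5\right)}$ while the $G$-term is at most $\dim_{B}\left(G\right)=\frac{\log\left(2\right)}{\log\left(3\right)}$, since the global limsup dominates any subsequential limit. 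The sum along this subsequence therefore converges to at most $\frac{\log\left(2\right)}{\log\left(3\right)}+\frac{\log\left(2\right)}{\log\left(5\right)}$, but as a subsequence of the convergent sequence $\set{\delta_{i}}$ it still tends to $L$, forcing $L\leq\frac{\log\left(2\right)}{\log\left(3\right)}+\frac{\log\left(2\right)}{\log\left(5\right)}$. The second alternative is symmetric: a subsequence lands in the range $\left(K_{6m+5},K_{6m+9}\right]$ for $G$, so Lemma \ref{deltan N(G,deltan) converge to log2log5} caps the $G$-term at $\frac{\log\left(2\right)}{\log\left(5\right)}$ and the $F$-term is bounded by $\dim_{B}\left(F\right)=\frac{\log\left(2\right)}{\log\left(3\right)}$, giving the same conclusion.

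The conceptual heart of the argument is Corollary \ref{subsequence corollary upper}: it guarantees that at least one of $F,G$ fails to look like the middle-third Cantor set along some subsequence, so at every length-scale at most one factor can contribute its maximal value $\frac{\log\left(2\right)}{\log\left(3\right)}$, the other being pinned at $\frac{\log\left(2\right)}{\log\left(5\right)}$. I expect the main technical obstacle to be the careful bookkeeping of nested subsequences needed to discharge the ``if the limit exists'' hypotheses of Lemmas \ref{deltan N(F,deltan) converge to log2log5} and \ref{deltan N(G,deltan) converge to log2log5} and to interchange the limsup of the sum with the behaviour of the individual terms; this is handled entirely by the boundedness of the box-counting ratios, which permits extraction of convergent sub-subsequences without losing the value $L$.
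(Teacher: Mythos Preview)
Your proposal is correct and follows essentially the same route as the paper: both reduce to bounding $\limsup_{\delta\to 0}\left[\frac{\log N(F,\delta)}{-\log\delta}+\frac{\log N(G,\delta)}{-\log\delta}\right]$, invoke Corollary~\ref{subsequence corollary upper} to land a subsequence in the stage ranges of Lemma~\ref{deltan N(F,deltan) converge to log2log5} or Lemma~\ref{deltan N(G,deltan) converge to log2log5}, and combine the resulting $\frac{\log 2}{\log 5}$ cap with the global bound $\frac{\log 2}{\log 3}$ from Lemma~\ref{upper box lemma}. Your version is in fact slightly more careful than the paper's about the order of subsequence extractions (first achieve the limsup of the sum, then pass to convergent sub-subsequences), which cleanly discharges the ``if the limit exists'' hypotheses; the paper compresses this into the phrase ``arbitrary convergent sequence''.
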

\begin{proof}
We have from \eqref{limsup of sum} that
\[
\limsup_{\delta\rightarrow 0} \frac{\log\left(N\left(F\times G,\delta\right)\right)}{-\log \delta} \leq \limsup_{\delta\rightarrow 0}\left[\frac{\log\left(N\left(F,\delta\right)\right)}{-\log\delta} + \frac{\log\left(N\left(G,\delta\right)\right)}{-\log\delta}\right]
\]
so it is sufficient to show that the right hand side is no greater than $\frac{\log\left(2\right)}{\log\left(3\right)} + \frac{\log\left(2\right)}{\log\left(5\right)}$. Suppose that $\set{\delta_{i}}$ is a sequence with $\delta_{i}\rightarrow 0$ such that the limits $\lim_{i\rightarrow \infty}\frac{\log\left(N\left(F,\delta_{i}\right)\right)}{-\log \delta_{i}}$ and $\lim_{i\rightarrow \infty}\frac{\log\left(N\left(G,\delta_{i}\right)\right)}{-\log \delta_{i}}$ exist. Corollary \ref{subsequence corollary upper} guarantees that this sequence either contains a subsequence $\set{\delta_{i_{n}}}$ satisfying the hypothesis of lemma \ref{deltan N(F,deltan) converge to log2log5} or contains a subsequence $\set{\delta_{i_{m}}}$ satisfying the hypothesis of lemma \ref{deltan N(G,deltan) converge to log2log5} so at least one of $\frac{\log\left(N\left(F,\delta_{i_{n}}\right)\right)}{-\log \delta_{i_{n}}}$ and $\frac{\log\left(N\left(G,\delta_{i_{n}}\right)\right)}{-\log \delta_{i_{n}}}$ converges to $\frac{\log\left(2\right)}{\log\left(5\right)}$. Using the upper box-counting dimension from lemma \ref{upper box lemma} to bind the other term yields
\[
\lim_{n\rightarrow \infty} \frac{\log\left(N\left(F,\delta_{i_{n}}\right)\right)}{-\log \delta_{i_{n}}} + \frac{\log\left(N\left(G,\delta_{i_{n}}\right)\right)}{-\log \delta_{i_{n}}} \leq \frac{\log\left(2\right)}{\log\left(3\right)} + \frac{\log\left(2\right)}{\log\left(5\right)}
\]
a bound which also hold for the original sequence $\set{\delta_{i}}$.
As $\set{\delta_{i}}$ was an arbitrary convergent sequence,
\[
\limsup_{\delta\rightarrow 0}\left[\frac{\log\left(N\left(F,\delta\right)\right)}{-\log\delta} + \frac{\log\left(N\left(G,\delta\right)\right)}{-\log\delta}\right] \leq \frac{\log\left(2\right)}{\log\left(3\right)} + \frac{\log\left(2\right)}{\log\left(5\right)}
\]
\end{proof}

\begin{corollary}
$\dim_{B}\left(F\times G\right)< \dim_{B}\left(F\right) + \dim_{B}\left(G\right)$
\end{corollary}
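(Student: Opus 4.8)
The plan is to combine the upper bound established in Theorem \ref{strict upper box product inequality} with the explicit values of the individual upper box-counting dimensions computed in Lemma \ref{upper box lemma}. Since Lemma \ref{upper box lemma} gives $\dim_{B}\left(F\right)=\dim_{B}\left(G\right)=\frac{\log 2}{\log 3}$, the right-hand side of the target inequality is exactly $\frac{\log 2}{\log 3}+\frac{\log 2}{\log 3}$, so it suffices to check that the bound furnished by Theorem \ref{strict upper box product inequality} falls strictly below this value.

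Concretely, I would first apply Theorem \ref{strict upper box product inequality} to obtain $\dim_{B}\left(F\times G\right)\leq \frac{\log 2}{\log 3}+\frac{\log 2}{\log 5}$, and then substitute the common value from Lemma \ref{upper box lemma} into $\dim_{B}\left(F\right)+\dim_{B}\left(G\right)=\frac{\log 2}{\log 3}+\frac{\log 2}{\log 3}$. The comparison then reduces to the single elementary inequality $\frac{\log 2}{\log 5}<\frac{\log 2}{\log 3}$, which holds because $\log 5>\log 3>0$ while $\log 2>0$. Chaining these gives $\dim_{B}\left(F\times G\right)\leq\frac{\log 2}{\log 3}+\frac{\log 2}{\log 5}<\frac{\log 2}{\log 3}+\frac{\log 2}{\log 3}=\dim_{B}\left(F\right)+\dim_{B}\left(G\right)$, as required.

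There is no genuine obstacle here: all of the substantive work has already been discharged in Theorem \ref{strict upper box product inequality} and Lemma \ref{upper box lemma}, and the present corollary is an immediate consequence together with a numerical comparison. It is worth emphasising, though, that the strictness is not accidental but is forced by the positive gap $\frac{\log 2}{\log 3}-\frac{\log 2}{\log 5}>0$. This gap encodes the mechanism behind the whole construction: because the box-counting functions of $F$ and $G$ oscillate \emph{out of phase}, their sum can never simultaneously attain the value $2\cdot\frac{\log 2}{\log 3}$, and at the offending length-scales one factor is pinned near $\frac{\log 2}{\log 5}$ by Lemma \ref{deltan N(F,deltan) converge to log2log5} or Lemma \ref{deltan N(G,deltan) converge to log2log5}.
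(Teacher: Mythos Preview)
Your proposal is correct and is exactly the intended argument: the paper states this corollary without proof immediately after Theorem \ref{strict upper box product inequality}, leaving the reader to combine that bound with Lemma \ref{upper box lemma} and the elementary comparison $\frac{\log 2}{\log 5}<\frac{\log 2}{\log 3}$, precisely as you have done.
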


\begin{theorem}
$\dim_{LB}\left(F\times G\right)\geq \frac{\log\left(2\right)}{\log\left(7\right)} + \frac{\log\left(2\right)}{\log\left(5\right)}.$
\end{theorem}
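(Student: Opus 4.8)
The plan is to mirror the proof of Theorem \ref{strict upper box product inequality}, dualising throughout: the roles of $\limsup$ and $N$ are replaced by $\liminf$ and $M$, and the auxiliary results used there are replaced by their lower-bound counterparts. From \eqref{liminf of sum}, which rests on the disjoint-ball inequality \eqref{maximal disjoint inequality}, we have
\[
\dim_{LB}\left(F\times G\right) \geq \liminf_{\delta\rightarrow 0}\left[\frac{\log\left(M\left(F,\delta\right)\right)}{-\log\delta} + \frac{\log\left(M\left(G,\delta\right)\right)}{-\log\delta}\right],
\]
so it suffices to bound this $\liminf$ below by $\frac{\log\left(2\right)}{\log\left(7\right)} + \frac{\log\left(2\right)}{\log\left(5\right)}$.

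To this end I would take an arbitrary sequence $\set{\delta_{i}}$ with $\delta_{i}\rightarrow 0$ along which both $\lim_{i\rightarrow\infty}\frac{\log\left(M\left(F,\delta_{i}\right)\right)}{-\log\delta_{i}}$ and $\lim_{i\rightarrow\infty}\frac{\log\left(M\left(G,\delta_{i}\right)\right)}{-\log\delta_{i}}$ exist, which I may arrange by passing to a subsequence since both box-counting functions are bounded. Corollary \ref{subsequence corollary lower} guarantees that this sequence contains a subsequence either satisfying the hypothesis of Lemma \ref{deltan M(F,deltan) converge to log2log5} (with stages of $F$ in $\left(K_{6n+3},K_{6n+7}\right]$) or satisfying the hypothesis of Lemma \ref{deltan M(G,deltan) converge to log2log5} (with stages of $G$ in $\left(K_{6m},K_{6m+4}\right]$). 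In the first case the $F$-term along this subsequence is at least $\frac{\log\left(2\right)}{\log\left(5\right)}$, while the $G$-term is at least $\dim_{LB}\left(G\right)=\frac{\log\left(2\right)}{\log\left(7\right)}$ by Lemma \ref{lower box lemma}; the second case is symmetric. Either way the limit of the sum along the subsequence is at least $\frac{\log\left(2\right)}{\log\left(7\right)} + \frac{\log\left(2\right)}{\log\left(5\right)}$, and since both individual limits already exist for $\set{\delta_{i}}$ this is also the value of the limit of the sum along the original sequence.

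As $\set{\delta_{i}}$ was an arbitrary sequence for which the two limits exist, every subsequential limit of $\frac{\log\left(M\left(F,\delta\right)\right)}{-\log\delta} + \frac{\log\left(M\left(G,\delta\right)\right)}{-\log\delta}$ is at least $\frac{\log\left(2\right)}{\log\left(7\right)} + \frac{\log\left(2\right)}{\log\left(5\right)}$, which is precisely the statement that the $\liminf$ is bounded below by this quantity. The point requiring the most care --- and the main obstacle --- is the direction of the argument in the $\liminf$ step: in the $\limsup$ argument of Theorem \ref{strict upper box product inequality} a single convenient subsequence suffices to bound the whole $\limsup$, whereas here the bound must be verified along \emph{every} convergent sequence. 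This is why the reduction to sequences for which both individual limits exist, together with the uniform lower bound $\frac{\log\left(2\right)}{\log\left(7\right)}$ on the ``free'' factor supplied by Lemma \ref{lower box lemma}, is essential to the proof.
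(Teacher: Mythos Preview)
Your proof is correct and follows essentially the same route as the paper: reduce via \eqref{liminf of sum}, take an arbitrary sequence along which both limits exist, apply Corollary \ref{subsequence corollary lower} together with Lemmas \ref{deltan M(F,deltan) converge to log2log5} and \ref{deltan M(G,deltan) converge to log2log5} to force one summand to be at least $\frac{\log 2}{\log 5}$, and bound the other below by $\frac{\log 2}{\log 7}$ using Lemma \ref{lower box lemma}. One small correction to your closing commentary: bounding a $\limsup$ \emph{from above}, as in Theorem \ref{strict upper box product inequality}, also requires checking every convergent sequence (a single subsequence only gives a \emph{lower} bound on a $\limsup$), so the two arguments are in fact perfectly symmetric rather than asymmetric as you suggest.
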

\begin{proof}
From \eqref{liminf of sum} we have
\[
\liminf_{\delta\rightarrow 0} \frac{\log\left(M\left(F\times G,\delta\right)\right)}{-\log\delta} \geq \liminf_{\delta\rightarrow 0} \left[\frac{\log\left(M\left(F,\delta\right)\right)}{-\log\delta} + \frac{\log\left(M\left(G,\delta\right)\right)}{-\log\delta} \right]
\]
so it is sufficient to prove that the right hand side is no less than $\frac{\log\left(2\right)}{\log\left(7\right)} + \frac{\log\left(2\right)}{\log\left(5\right)}$. Suppose that $\set{\delta_{i}}$ is a sequence with $\delta_{i}\rightarrow 0$ such that the limits $\lim_{i\rightarrow \infty}\frac{\log\left(N\left(F,\delta_{i}\right)\right)}{-\log \delta_{i}}$ and $\lim_{i\rightarrow \infty}\frac{\log\left(N\left(G,\delta_{i}\right)\right)}{-\log \delta_{i}}$ exist. In a similar fashion to theorem \ref{strict upper box product inequality}, corollary \ref{subsequence corollary lower} and lemmas \ref{deltan M(F,deltan) converge to log2log5} and \ref{deltan M(G,deltan) converge to log2log5} guarantee that at least one of  $\lim_{i\rightarrow\infty}\frac{\log\left(M\left(F,\delta_{i_{n}}\right)\right)}{-\log\delta_{i_{n}}}$ and $\lim_{i\rightarrow\infty}\frac{\log\left(M\left(G,\delta_{i_{n}}\right)\right)}{-\log\delta_{i_{n}}}$ is no less than $\frac{\log\left(2\right)}{\log\left(5\right)}$ so
\[
\liminf_{\delta\rightarrow 0} \left[\frac{\log\left(M\left(F,\delta\right)\right)}{-\log\delta} + \frac{\log\left(M\left(G,\delta\right)\right)}{-\log\delta}\right] \geq \frac{\log\left(2\right)}{\log\left(5\right)} + \frac{\log\left(2\right)}{\log\left(7\right)}
\]
\end{proof}

\begin{corollary}
$\dim_{LB}\left(F\times G\right) > \dim_{LB}\left(F\right) + \dim_{LB}\left(G\right)$
\end{corollary}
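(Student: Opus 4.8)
The plan is to read off the result by comparing the lower bound just established with the lower box-counting dimensions of the individual factors, so that the corollary becomes a single elementary numerical comparison. First I would invoke the preceding theorem, which supplies $\dim_{LB}(F\times G)\geq \frac{\log 2}{\log 7}+\frac{\log 2}{\log 5}$, together with Lemma~\ref{lower box lemma}, which gives $\dim_{LB}(F)=\dim_{LB}(G)=\frac{\log 2}{\log 7}$ and hence $\dim_{LB}(F)+\dim_{LB}(G)=\frac{2\log 2}{\log 7}$. Substituting these into the claimed strict inequality reduces everything to
\[
\frac{\log 2}{\log 7}+\frac{\log 2}{\log 5} \;>\; \frac{\log 2}{\log 7}+\frac{\log 2}{\log 7}.
\]

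Next I would cancel the common summand $\frac{\log 2}{\log 7}$ from both sides, leaving the equivalent statement $\frac{\log 2}{\log 5}>\frac{\log 2}{\log 7}$. Since $\log 2>0$ and both $\log 5$ and $\log 7$ are positive, this is in turn equivalent to $\log 5<\log 7$, which holds by the strict monotonicity of the logarithm applied to $5<7$. Chaining the bounds then yields $\dim_{LB}(F\times G)\geq \frac{\log 2}{\log 7}+\frac{\log 2}{\log 5}>\dim_{LB}(F)+\dim_{LB}(G)$, which is exactly the assertion.

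There is no genuine obstacle at this stage: the entire force of the strict inequality has already been absorbed into the preceding theorem, whose lower bound carries the summand $\frac{\log 2}{\log 5}$ in place of a second copy of $\frac{\log 2}{\log 7}$. Conceptually this replacement is precisely what the phase-shifted construction delivers, since Lemma~\ref{delta not in both length-scales lower} forbids $F$ and $G$ from simultaneously displaying their $\frac57$-like behaviour (local exponent $\frac{\log 2}{\log 7}$) and so forces at least one factor into its $\frac35$-like regime with the strictly larger local exponent $\frac{\log 2}{\log 5}$. The only point that remains to be checked is that this larger exponent really does exceed the one it supplants, and that is exactly the elementary comparison $\log 5<\log 7$ carried out above.
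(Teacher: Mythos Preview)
Your proposal is correct and matches the paper's approach: the paper states this corollary without proof, treating it as immediate from the preceding theorem together with Lemma~\ref{lower box lemma}, and you have simply written out the obvious numerical comparison that justifies it.
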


\begin{acknowledgements}\label{ackref}
I am greatly indebted to my PhD supervisor, James Robinson, for highlighting that there was no example of a strict product inequality in the box-counting literature and to the generosity of Isabelle Harding and Matt Gibson in providing accommodation whilst much of this work was completed.
\end{acknowledgements}

\affiliationone{
   Nick Sharples,\\
   Mathematics Institute,\\
    University of Warwick,\\
    Coventry\\
   CV4 7AL\\
   UK
   \email{n.sharples@warwick.ac.uk}}
%
\end{document}